\numberwithin{equation}{section}
\newtheorem{theorem}{Theorem}[section]
\newtheorem{lemma}[theorem]{Lemma}
\theoremstyle{definition}
\DeclareMathOperator\lk{\mathrm{lk}}
\DeclareMathOperator\rk{\mathrm{rk}}
\DeclareMathOperator\sd{\mathrm{sd}}
\newcommand{\ZZ}{{\mathbb Z}}
\newcommand{\Sm}{{\mathcal S}}
\newcommand{\Tor}{\ensuremath{\mathrm{Tor}}\hspace{1pt}}
\newcommand{\KK}{{\mathbb K}}
\title{Face numbers and the fundamental group}
\author{Satoshi Murai\thanks{Research is partially
supported by JSPS KAKENHI 	16K05102.}\\
\small Department of Pure and Applied Mathematics\\[-0.8ex]
\small Graduate School of Information Science and Technology\\[-0.8ex]
\small Osaka University, Suita, Osaka 565-0871, Japan\\[-0.8ex]
\small \texttt{s-murai@ist.osaka-u.ac.jp}
\and Isabella Novik\thanks{Research is partially\textsl{}
supported by NSF grant DMS-1361423}\\
\small Department of Mathematics\\[-0.8ex]
\small University of Washington\\[-0.8ex]
\small Seattle, WA 98195-4350, USA\\[-0.8ex]
\small \texttt{novik@math.washington.edu}
}
\begin{document}
\maketitle

\begin{abstract} We resolve a conjecture of Kalai asserting that the $g_2$-number of any simplicial complex $\Delta$ that represents a connected normal pseudomanifold of dimension $d\geq 3$ is at least as large as ${d+2 \choose 2}m(\Delta)$, where $m(\Delta)$ denotes the minimum number of generators of the fundamental group of $\Delta$. Furthermore, we prove that a weaker bound,  $h_2(\Delta)\geq {d+1 \choose 2}m(\Delta)$, applies to any $d$-dimensional pure simplicial poset $\Delta$ all of whose faces of co-dimension $\geq 2$ have connected links. This generalizes a result of Klee. Finally, for a pure relative simplicial poset $\Psi$ all of whose vertex links satisfy Serre's condition $(S_r)$, we establish lower bounds on $h_1(\Psi),\ldots,h_r(\Psi)$ in terms of the $\mu$-numbers introduced by Bagchi and Datta. 
\end{abstract}

\section{Introduction}
Our starting point is the lower bound theorem of Barnette \cite{Barnette-73}, Kalai \cite{Kalai-87}, and Fogelsanger \cite{Fogelsanger-88} asserting that in the class of all simplicial complexes representing connected normal pseudomanifolds without boundary of dimension $d\geq 2$ and with $n$ vertices, the boundary complex of a stacked $(d+1)$-dimensional polytope simultaneously minimizes all the face numbers. In the last decade or so, a lot of effort went into strengthening these bounds by taking into consideration the topology of the complex in question, see, for instance, \cite{Murai-15,Murai-Nevo-14,Murai-Novik-16,Novik-Swartz-09:Socles,Novik-Swartz-12,Swartz-09}. While the theorems proved and techniques developed significantly increased our understanding of how the Betti numbers of a topological space $M$ affect possible face numbers of triangulations of $M$,  the relationship between the face numbers and other topological invariants, for instance, the fundamental group, remained virtually unknown. The main goal of this note is to use the $\mu$-numbers introduced by Bagchi and Datta \cite{Bagchi-Datta-14} to establish bounds on the face numbers in terms of the minimum number of generators of the fundamental group.

Let $\Delta$ be a $(d-1)$-dimensional simplicial complex.
We denote by $f_i(\Delta)$ the number of $i$-dimensional faces of $\Delta$ for $i=-1,0,1,\dots,d-1$, with $f_{-1}(\Delta)=1$ accounting for the empty face.
The $f$-vector of $\Delta$ is the vector $f(\Delta)=(f_{-1}(\Delta),f_0(\Delta),\dots,f_{d-1}(\Delta))$. The $h$-vector of $\Delta$, $h(\Delta)=(h_0(\Delta),h_1(\Delta),\dots,h_d(\Delta))$, is defined by $\sum_{i=0}^d h_i(\Delta) x^{d-i}=\sum_{j=0}^{d} f_{j-1}(\Delta) (x-1)^{d-j}$; equivalently,
$h_i(\Delta)=\sum_{j=0}^i (-1)^{i-j} {d-j \choose i-j} f_{j-1}(\Delta)$
for $i=0,1,\dots,d$. The consecutive differences, $h_i(\Delta)-h_{i-1}(\Delta)$ are known as the $g$-numbers. Specifically, we will be interested in $g_2:=h_2-h_1$.
When $\Delta$ is connected, we denote by $\pi_1(\Delta)$ the fundamental group of (the geometric realization of) $\Delta$ and by $m(\Delta)$ the minimum number of generators of $\pi_1(\Delta)$.

Kalai \cite{Kalai-87} conjectured that any triangulation $\Delta$ of a connected closed manifold of dimension $d \geq 3$ satisfies
\begin{align}
\label{1-1}
g_2(\Delta)\geq {d+2 \choose 2} b_1(\Delta),
\end{align}
where $b_1(\Delta)$ is the first Betti number of $\Delta$ (computed with coefficients in a field). In fact, he conjectured that under the same assumptions the following stronger inequality holds
\begin{align}
\label{1-2}
g_2(\Delta)\geq {d+2 \choose 2} m(\Delta).
\end{align}
The second inequality is stronger than the first one since, as follows from the Hurewicz theorem, $m(\Delta)\geq b_1(\Delta)$ for any $\Delta$. Moreover, $m(\Delta)$ can be (arbitrarily) larger than $b_1(\Delta)$: there exist manifolds with a perfect fundamental group, and hence vanishing first homology. 

Inequality \eqref{1-1} was established in \cite{Novik-Swartz-09:Socles} (for oriented manifolds) and in \cite{Murai-15} (for all manifolds). One of the main results of this paper is the proof of the other inequality:

\begin{theorem}
\label{main1} Let $\Delta$ be a simplicial complex of dimension $d \geq 3$.
Assume further that $\Delta$ is a connected normal pseudomanifold. Then
$$g_2(\Delta)\geq {d+2 \choose 2} m(\Delta).$$ Moreover, if $d \geq 4$, then
$g_2(\Delta) = {d+2 \choose 2} m(\Delta)$ if and only if $\Delta$ is a stacked manifold.
\end{theorem}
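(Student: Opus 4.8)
The plan is to read the desired inequality as a lower bound on the number of edges: since $g_2(\Delta)=f_1(\Delta)-(d+1)f_0(\Delta)+\binom{d+2}{2}$, the assertion is equivalent to $f_1(\Delta)\ge (d+1)f_0(\Delta)-\binom{d+2}{2}+\binom{d+2}{2}m(\Delta)$, i.e. the complexity of $\pi_1$ is forced to produce extra edges. I would route this through the homological bound $g_2(\Delta)\ge\binom{d+2}{2}b_1(\Delta;\field)$ (the inequality \eqref{1-1}, which for a normal pseudomanifold I would obtain from the $\mu$-number lower bounds on $h$-numbers proved in the third result, applied with $r=2$: a normal pseudomanifold of dimension $d\ge 3$ has all links of codimension $\ge 2$ connected and hence satisfies Serre's condition $(S_2)$), and then upgrade $b_1$ to $m$. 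The geometric engine throughout is Fogelsanger's theorem \cite{Fogelsanger-88}, which gives generic $(d+1)$-rigidity of the $1$-skeleton and thereby identifies $g_2(\Delta)$ with the dimension of the space of affine stresses; ``extra stresses'' are the currency in which topological complexity must be paid.

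Next I would make $\pi_1$ combinatorial. Since every vertex star is a cone, the nerve/van Kampen theorem presents $\pi_1(\Delta)=\pi_1(\Delta^{(2)})$ by the edge-path presentation whose generators are the edges outside a fixed spanning tree $T$ and whose relators are the triangles of $\Delta$. This realizes $m(\Delta)$ as the minimal number of generators of a group given by an explicit $2$-complex supported on $\Delta^{(2)}$. The intention is then to run a Tietze reduction of this presentation---in which each triangle removes at most one generator---in parallel with the construction of a basis of the stress space, arranging that every generator that cannot be eliminated is charged a block of $\binom{d+2}{2}$ linearly independent stresses.

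The crux, and the step I expect to be the main obstacle, is upgrading $b_1$ to $m$. The naive count of non-removable generators sees only the abelianization and returns $b_1(\Delta;\field)$, whereas $m(\Delta)$ is strictly larger whenever $\pi_1$ is perfect: for a triangulated homology sphere with perfect fundamental group one has $b_1(\Delta;\field)=0$ for every field $\field$ and yet $m(\Delta)>0$, so no field-homological quantity can detect $m$, and a genuinely non-abelian input is unavoidable. My plan to supply it is to perform the stress/rigidity accounting not on $\Delta$ but on its universal cover, i.e. with coefficients in the group ring $\field[\pi_1(\Delta)]$, so that the relation module rather than $H_1(\Delta;\field)$ governs the bookkeeping; the minimal number of generators of $\pi_1$ then appears as the rank of a free part, and rigidity converts each such generator into its own block of $\binom{d+2}{2}$ stresses. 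Reconciling the non-abelian Tietze moves with this linear stress count---showing the two reductions can be carried out compatibly so that the surviving generators number exactly $m(\Delta)$---is the heart of the matter.

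For the equality statement with $d\ge 4$, the ``if'' direction is a direct computation: a stacked manifold is obtained from a stacked sphere by attaching handles, its fundamental group is free of rank equal to the number of handles---which is $m(\Delta)$---and each handle contributes exactly $\binom{d+2}{2}$ to $g_2$, so $g_2(\Delta)=\binom{d+2}{2}m(\Delta)$. For the converse, equality forces every estimate above to be sharp; in particular the rigidity inequality is sharp, and exactly as in the characterization ``$g_2=0$ implies stacked sphere'' together with its relative and manifold refinements---which require $d\ge 4$ for the equality case of the rigidity analysis to be rigid enough---sharpness propagates to force $\Delta$ to decompose as a stacked manifold. The case $d=3$ is excluded precisely because the equality analysis of the rigidity inequality fails to pin down the stacked structure there.
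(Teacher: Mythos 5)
You have correctly located the crux --- that $m(\Delta)$ is invisible to any field-homological invariant, so a genuinely non-abelian input is required --- but your proposal does not supply that input; it only names where you hope it would come from. The plan to ``perform the stress/rigidity accounting on the universal cover, i.e.\ with coefficients in $\field[\pi_1(\Delta)]$, so that $m(\Delta)$ appears as the rank of a free part'' is not an argument: there is no theory of generic rigidity or stress spaces over the (generally non-commutative) group ring that produces $\binom{d+2}{2}$ independent stresses per generator, and the claim that the minimal number of generators of $\pi_1$ is read off as a module rank is exactly the kind of statement that fails in general (minimal generation of a group is not a rank of any associated module). You acknowledge that reconciling the Tietze reduction with the stress count ``is the heart of the matter,'' and that heart is missing. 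A secondary error: you propose to derive \eqref{1-1} from Theorem~\ref{main3} with $r=2$, but that theorem bounds $h_2$ by $\binom{d+1}{2}(\mu_1-\mu_0+1)$, not $g_2$ by $\binom{d+2}{2}b_1$; the coefficient $\binom{d+2}{2}$ on $g_2$ comes from the rigidity-based Theorem~\ref{LBT:mu} (quoted from prior work), not from the Serre-condition machinery.

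The paper's actual route avoids all of this by interposing the quantity $\mu_1(\Delta)-\mu_0(\Delta)+1$. Theorem~\ref{LBT:mu} (already known) gives $g_2\geq\binom{d+2}{2}(\mu_1-\mu_0+1)$, so the entire new content is the purely topological inequality $\mu_1-\mu_0+1\geq m(\Delta)$ (Theorem~\ref{3.3}). This is proved by induction on the vertices in a fixed ordering: adding the star $v_n*\lk(v_n,\Delta)$ to $\Delta_{\{v_1,\dots,v_{n-1}\}}$ increases $\mu_1^\varsigma-\mu_0^\varsigma$ by $\tilde b_0(\lk(v_n,\Delta))-\tilde b_{-1}(\lk(v_n,\Delta))$, and Lemmas~\ref{3.1} and \ref{3.2} show via Seifert--van Kampen that $m$ can increase by at most the same amount. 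The non-abelian information is carried entirely by van Kampen's theorem applied to a cone attachment --- each extra connected component of a link costs at most one new generator --- and the point is that $\mu_1-\mu_0+1$, unlike $b_1$, is a triangulation-dependent count of exactly these attachments. Your proposal never identifies such an intermediary, so the gap between $b_1$ and $m$ remains unbridged. (Your treatment of the equality case is essentially the paper's and is fine once the inequality is in place.)
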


Our proof of Theorem \ref{main1} is based on studying the $\mu$-numbers introduced by Bagchi and Datta \cite{Bagchi-Datta-14}, and, specifically, on the following result verified in \cite{Murai-15} (see the proof of Theorem 5.3 there). We postpone the definition of the $\mu$-numbers as well as several other definitions until the next section, and for now merely mention that the $\mu$-numbers satisfy the Morse inequalities; in particular, $\mu_1-\mu_0\geq b_1-b_0$.

\begin{theorem}
\label{LBT:mu} 
Let $\Delta$ be a simplicial complex of dimension $d \geq 3$.
Assume further that $\Delta$ is a connected normal pseudomanifold. Then
$$g_2(\Delta) \geq {d+2 \choose 2} (\mu_1(\Delta)-\mu_0(\Delta)+1)\geq {d+2 \choose 2}b_1(\Delta).$$
Moreover, if $d \geq 4$, then
$g_2(\Delta) = {d+2 \choose 2} (\mu_1(\Delta)-\mu_0(\Delta)+1)$ if and only if $g_2(\Delta) ={d+2 \choose 2}b_1(\Delta)$, which, in turn, happens if and only if $\Delta$ is a stacked manifold.
\end{theorem}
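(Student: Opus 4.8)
The plan is to prove the two inequalities in the statement separately and then to treat the equality case. The right-hand inequality $\binom{d+2}{2}(\mu_1(\Delta)-\mu_0(\Delta)+1)\geq \binom{d+2}{2}b_1(\Delta)$ is immediate from the Morse-type inequality $\mu_1-\mu_0\geq b_1-b_0$ together with $b_0(\Delta)=1$ (as $\Delta$ is connected), so all the work lies in the left-hand inequality $g_2(\Delta)\geq \binom{d+2}{2}(\mu_1(\Delta)-\mu_0(\Delta)+1)$. I would attack it through the Stanley--Reisner ring $\field[\Delta]$. Fix an infinite field $\field$, let $\Theta=(\theta_1,\dots,\theta_{d+1})$ be a generic linear system of parameters and $\omega$ one further generic linear form, and set $A=\field[\Delta]/\Theta$ and $B=A/\omega A$. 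A connected normal pseudomanifold satisfies Serre's condition $(S_2)$, which guarantees that in degrees at most two the Hilbert function of $A$ agrees with the $h$-vector, so $\dim_\field A_1=h_1(\Delta)$ and $\dim_\field A_2=h_2(\Delta)$. Moreover, by Fogelsanger's theorem \cite{Fogelsanger-88} the $1$-skeleton of $\Delta$ is generically $(d+1)$-rigid, which translates into the injectivity of multiplication $\cdot\,\omega\colon A_1\to A_2$; hence $g_2(\Delta)=h_2(\Delta)-h_1(\Delta)=\dim_\field B_2$. The problem is thereby reduced to exhibiting inside $B_2$ a subspace of dimension at least $\binom{d+2}{2}(\mu_1(\Delta)-\mu_0(\Delta)+1)$.

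The crux is this dimension count, and here the topology of $\Delta$ must enter face by face. The Bagchi--Datta numbers $\mu_i(\Delta)$ are, by their definition (recalled in the next section), weighted sums over the faces $\sigma\in\Delta$ of reduced Betti numbers of the links $\lk_\Delta\sigma$, so the combination $\mu_1-\mu_0+1$ is a weighted tally of the classes in $\tilde H_1(\lk_\Delta\sigma;\field)$ and $\tilde H_0(\lk_\Delta\sigma;\field)$. To convert these into elements of $B_2$ I would use Gr\"abe's description of the local cohomology modules $H^i_\mideal(\field[\Delta])$ as $\field[\Delta]$-modules, whose pieces are indexed by the faces $\sigma$ and carry the reduced cohomology of the links; this is the natural home for the $\mu$-numbers. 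Each such local class should produce, via the socle of $B$ (equivalently, via a stress on the $1$-skeleton), a block of elements of $B_2$ of dimension $\binom{d+2}{2}$ --- the dimension of the space of trivial infinitesimal motions in $\R^{d+1}$, i.e. of the quadrics in the span of $\Theta$ and $\omega$. Assembling the contributions of all faces and summing the weights should reproduce exactly $\binom{d+2}{2}(\mu_1-\mu_0+1)$.

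I expect the main obstacle to be precisely this local-to-global assembly. Two points are delicate: first, matching the algebraically natural socle (or stress) contributions to the combinatorially defined $\mu$-numbers with the correct multiplicity $\binom{d+2}{2}$ and the correct rational weights attached to faces of different dimensions; and second, and more seriously, proving that the blocks attached to different faces are linearly independent in $B_2$, so that no cancellation occurs and the weighted counts genuinely add up. Arranging the genericity of $\Theta$ and $\omega$ so that Fogelsanger's injectivity and this independence hold simultaneously is the technical heart of the argument.

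Finally, for the equality case with $d\geq 4$, write $(A)$ for the statement $g_2=\binom{d+2}{2}(\mu_1-\mu_0+1)$, write $(B)$ for $g_2=\binom{d+2}{2}b_1$, and write $(C)$ for the statement that $\Delta$ is a stacked manifold. The chain $g_2\geq\binom{d+2}{2}(\mu_1-\mu_0+1)\geq\binom{d+2}{2}b_1$ makes $(B)\Rightarrow(A)$ automatic by squeezing. For the remaining implications I would argue that equality in the refined bound forces the subspace constructed above to exhaust $B_2$ and the socle estimate to be sharp in every face-degree, making the rigidity inequality minimal; the known characterization of the equality case of the lower bound theorem for homology manifolds --- available only for $d\geq 4$ --- then identifies $\Delta$ as a stacked manifold, giving $(A)\Rightarrow(C)$. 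Since a stacked manifold satisfies $\mu_1-\mu_0+1=b_1$ and $g_2=\binom{d+2}{2}b_1$ by direct computation, we obtain $(C)\Rightarrow(B)$, closing the loop $(A)\Rightarrow(C)\Rightarrow(B)\Rightarrow(A)$ and establishing the asserted three-way equivalence.
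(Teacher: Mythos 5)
First, a point of reference: the paper does not prove Theorem \ref{LBT:mu} itself; it imports it from the proof of Theorem 5.3 of \cite{Murai-15}. The argument there (and the one echoed in Section 4 of this paper) is localized at the vertex links: one writes $\mu_1(\Delta)=\sum_v\tilde\sigma_0(\lk(v,\Delta))$ via Lemma \ref{2.2}, bounds each $\tilde\sigma_0(\lk(v,\Delta))$ --- a weighted alternating sum of graded Betti numbers $\beta_{k-1,k}$ of the link's Stanley--Reisner ring --- using the generic $(d+1)$-rigidity of the \emph{links} (Fogelsanger applied one dimension down), and then sums over $v$ using the identity $2h_2(\Delta)+d\,h_1(\Delta)=\sum_v h_1(\lk(v,\Delta))$. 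Your plan instead works globally in the Artinian reduction $B=\field[\Delta]/(\Theta,\omega)$, and its central step --- ``exhibiting inside $B_2$ a subspace of dimension at least $\binom{d+2}{2}(\mu_1-\mu_0+1)$'' --- is both left undone (you yourself flag the local-to-global assembly and the linear independence of the blocks as the unresolved technical heart) and not well-posed as stated: $\mu_1-\mu_0+1$ is by definition an average over all vertex orderings, hence in general a non-integral rational number, so no single subspace of $B_2$ can have that dimension. The inequality has to be obtained by averaging a family of inequalities (one per ordering, or equivalently the weighted Betti-number bounds packaged in the $\tilde\sigma$-numbers), not by one dimension count in a fixed quotient. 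As written, the crux of the proof is missing.

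The equality discussion also has a gap. Your loop $(A)\Rightarrow(C)\Rightarrow(B)\Rightarrow(A)$ hinges on $(A)\Rightarrow(C)$, for which you invoke ``the known characterization of the equality case of the lower bound theorem for homology manifolds.'' That characterization addresses equality in $g_2=\binom{d+2}{2}b_1$, i.e.\ statement $(B)$; since $(A)$ is a priori \emph{weaker} than $(B)$ (the squeeze gives $(B)\Rightarrow(A)$, not the converse), it cannot be used to deduce $(C)$ from $(A)$. The genuinely new content for $d\geq 4$ is that equality in the $\mu$-bound forces every vertex link to be a stacked sphere, and that a normal pseudomanifold of dimension $\geq 4$ all of whose vertex links are stacked spheres is itself a stacked manifold; neither step appears in your argument. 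That this implication is not automatic is shown by the $d=3$ case, which the paper discusses explicitly: there the class with $g_2=10(\mu_1-\mu_0+1)$ is strictly larger than the class of stacked manifolds (it contains, e.g., boundaries of cyclic $4$-polytopes), so $(A)\not\Rightarrow(B)$ in general and the restriction $d\geq 4$ must do real work.
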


To prove Theorem \ref{main1}, we show that \textit{any} connected simplicial complex (and, even more generally, simplicial poset) $\Delta$ satisfies
\begin{align} \label{mu-vs-m}
\mu_1(\Delta)-\mu_0(\Delta)+1 \geq m(\Delta).
\end{align} 
In other words, the $\mu$-numbers appear to provide much finer information about the complex than the Betti numbers do. This inequality also leads to the following generalization of a result of Klee \cite{Klee-09}. (Klee proved the same statement but in the special case of \textit{balanced} simplicial posets.)

\begin{theorem}
\label{main2}
Let $\Delta$ be a pure connected simplicial complex (or, more generally, a simplicial poset) of dimension $d \geq 2$. Assume further that all faces of $\Delta$ of dimension $\leq d-2$ have connected links. Then
$$h_2(\Delta) \geq {d+1 \choose 2} m(\Delta).$$
\end{theorem}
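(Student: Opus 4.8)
The plan is to derive Theorem~\ref{main2} from \eqref{mu-vs-m} together with the intermediate inequality
$$h_2(\Delta)\ \geq\ \binom{d+1}{2}\bigl(\mu_1(\Delta)-\mu_0(\Delta)+1\bigr),$$
valid for every pure $d$-dimensional simplicial poset $\Delta$ with $d\geq 2$ all of whose faces of dimension $\leq d-2$ have connected links. This is the exact analogue of Theorem~\ref{LBT:mu}, with $g_2$ and $\binom{d+2}{2}$ replaced by $h_2$ and $\binom{d+1}{2}$, but now for the broad class in the hypothesis rather than for normal pseudomanifolds. Granting it, the theorem is immediate: \eqref{mu-vs-m} gives $\mu_1(\Delta)-\mu_0(\Delta)+1\geq m(\Delta)$, and since $\binom{d+1}{2}>0$ we may multiply and chain the two inequalities to obtain $h_2(\Delta)\geq\binom{d+1}{2}m(\Delta)$. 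Thus all of the work is concentrated in the intermediate inequality; Klee's theorem is precisely its restriction to balanced posets, so the task is to trade balancedness for connectivity of codimension-$\geq 2$ links.

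To prove the intermediate inequality I would pass to the face ring $\field[\Delta]$ and its Artinian reduction $A=\field[\Delta]/\Theta$ modulo a generic linear system of parameters. The hypothesis that all faces of dimension $\leq d-2$ have connected links is exactly what forces the low-degree local cohomology modules $H^i_{\mideal}(\field[\Delta])$ to be concentrated in the degrees recorded by the $\mu$-numbers; consequently $h_2(\Delta)$ equals $\dim_\field A_2$ up to corrections encoded by $\mu_0(\Delta)$ and $\mu_1(\Delta)$, and the failure of $A$ to be Cohen--Macaulay in the relevant range is governed by $\mu_1(\Delta)-\mu_0(\Delta)+1$. The core of the argument is then to produce, for each of these $\mu_1(\Delta)-\mu_0(\Delta)+1$ independent homological directions, an entire degree-two space of linearly independent elements of $A$; this yields the factor $\binom{d+1}{2}$, which is the dimension of the degree-two part of a polynomial ring in $d$ variables (equivalently, the dimension of the stress space contributed by a single homology class when the $1$-skeleton is generically embedded in $\R^{d}$). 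Since each $\mu_i$ is a weighted sum of reduced Betti numbers of links, the bookkeeping is organized by summing over vertices: every link $\lk_\Delta v$ again belongs to the class in dimension $d-1$, the binomial weights in the definition of the $\mu$-numbers are tailored so that the local contributions reassemble into $\mu_\bullet(\Delta)$, and the inherited $h$-vector relations between $\Delta$ and its vertex links feed the local degree-two elements back into $A_2$.

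The crux --- and the only place where removing balancedness really costs something --- is establishing the correct rank behaviour of the multiplication maps on $A$ in degrees one and two (the algebraic shadow of generic $d$-rigidity of the $1$-skeleton), using merely the connectivity of codimension-$\geq 2$ links in place of the manifold-like structure available for Theorem~\ref{LBT:mu}; this is what loses one ambient dimension and turns the $g_2$-bound with constant $\binom{d+2}{2}$ into an $h_2$-bound with constant $\binom{d+1}{2}$. Pinning down that the defect is measured \emph{exactly} by $\mu_1(\Delta)-\mu_0(\Delta)+1$, rather than by some smaller combination, is the delicate point, since it is this precise quantity that \eqref{mu-vs-m} then bounds below by $m(\Delta)$; once the intermediate inequality is in hand with the sharp constant and the sharp multiplicity, the passage to the fundamental group is cost-free.
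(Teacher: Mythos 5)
Your top-level reduction is exactly the paper's: Theorem~\ref{main2} is obtained by chaining \eqref{mu-vs-m} with the intermediate inequality $h_2(\Delta)\geq\binom{d+1}{2}\bigl(\mu_1(\Delta)-\mu_0(\Delta)+1\bigr)$, which in the paper is precisely the case $r=2$, $\Gamma=\emptyset$, $i=2$ of Theorem~\ref{main3} (after observing that the hypothesis on links of faces of dimension $\leq d-2$ is equivalent to saying that every vertex link, a $(d-1)$-dimensional complex, satisfies Serre's condition $(S_2)$). So the skeleton of your argument is right, and the final chaining step is fine.

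The gap is in how you propose to prove the intermediate inequality. You locate the crux in ``the correct rank behaviour of the multiplication maps on $A=\field[\Delta]/\Theta$ in degrees one and two (the algebraic shadow of generic $d$-rigidity of the $1$-skeleton).'' That is the mechanism behind Theorem~\ref{LBT:mu} (the $g_2$-bound with constant $\binom{d+2}{2}$ for normal pseudomanifolds), and it is not available here: mere connectivity of codimension-$\geq 2$ links is far too weak to give generic rigidity of the $1$-skeleton, and no rigidity enters the proof of Theorem~\ref{main3}. The paper's actual mechanism is entirely localized at the vertex links and runs through four ingredients: (a) the Murai--Terai theorem (Theorem~\ref{4.1}), which for an $(S_r)$ complex produces an l.s.o.p.\ whose successive multiplication maps are injective in degrees $\leq r$ and whose Artinian reduction has Hilbert function $h_i$ for $i\leq r$; (b) Lemma~\ref{4.2}/\ref{4.3}, converting this into upper bounds on alternating sums of the $\tilde\sigma$-numbers of each link in terms of the $h$-numbers of that link; (c) Hochster's formula in the form of Lemma~\ref{2.2}, $\mu_i(\Delta)=\sum_v\tilde\sigma_{i-1}(\lk(v,\Delta))$; and (d) Swartz's identity $i\,h_i(\Delta)+(d-i+2)h_{i-1}(\Delta)=\sum_v h_{i-1}(\lk(v,\Delta))$, which makes the sum over vertices telescope. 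Your sketch gestures at (c) and (d) but replaces (a) and (b) --- the step where the $(S_2)$ hypothesis is actually consumed --- with a rigidity heuristic that would fail, and your claim that the defect is measured ``exactly'' by $\mu_1-\mu_0+1$ is also stronger than what is true or needed (only an inequality is proved). Finally, for the simplicial-poset case one must additionally check that the face-module machinery (Hochster's formula, squarefreeness, Theorem~\ref{4.1}) and the identity $\mu_i(\Delta)=\mu_i(\sd(\Delta))$ carry over, which the paper does in Section~5 and which your proposal does not address.
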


In order to prove Theorem \ref{main2}, we, in fact, establish the following more general result: 
\begin{theorem}
\label{main3}
Let $r \leq d$ and let $(\Delta,\Gamma)$ be a pure relative simplicial complex (or, more generally, a pure relative simplicial poset) of dimension $d$.
If for each vertex $v$ of $\Delta$, the link $(\lk(v,\Delta),\lk(v,\Gamma))$ satisfies Serre's condition $(S_r)$, then
$$h_i(\Delta,\Gamma) \geq {d+1 \choose i} \left( \sum_{j=1}^i (-1)^{i-j} \mu_{j-1}(\Delta,\Gamma;\KK) +(-1)^i f_{-1}(\Delta,\Gamma) \right) \quad \mbox{for all } i \leq r.$$
In particular, if $(\Delta,\Gamma)$ is Buchsbaum then these inequalities hold for all $i\leq d$.
\end{theorem}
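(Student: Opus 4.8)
The plan is to recast the statement in the language of relative Stanley--Reisner theory and then run a local-cohomology/socle argument. Let $S=\KK[x_v:v\in V(\Delta)]$ and let $M=\KK[\Delta,\Gamma]$ be the relative face module of $(\Delta,\Gamma)$ (for a simplicial poset, its relative face module in the sense of Stanley); it is a $\ZZ^{V}$-graded $S$-module of Krull dimension $d+1$ whose Hilbert function records the relative $f$- and $h$-numbers. After choosing a generic linear system of parameters $\Theta=\theta_1,\dots,\theta_{d+1}$, I would work with the algebraic $h$-numbers $\tilde h_i:=\dim_\KK(M/\Theta M)_i$ and compare them both to the combinatorial $h_i(\Delta,\Gamma)$ and to the local cohomology modules $H^j_\mideal(M)$.

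The first ingredient is a dictionary between the $\mu$-numbers and local cohomology. By the relative (and simplicial-poset) version of Hochster's formula, the graded piece of $H^j_\mideal(M)$ in coarse degree $0$ computes the relative reduced cohomology $\tilde H^{j-1}(\Delta,\Gamma;\KK)$, while the pieces supported on single vertices compute $\bigoplus_v \tilde H^{j-2}(\lk(v,\Delta),\lk(v,\Gamma);\KK)$; together with $f_{-1}(\Delta,\Gamma)$ in the bottom slot, these are exactly the data assembled into $\mu_{j-1}(\Delta,\Gamma;\KK)$. This identifies the alternating sum on the right-hand side with an alternating sum of dimensions of low-degree pieces of the modules $H^j_\mideal(M)$.

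The second, and structurally decisive, ingredient is that Serre's condition $(S_r)$ on the vertex links controls these modules. Applying a Reisner/Schenzel-type criterion for $(S_r)$ to each $(\lk(v,\Delta),\lk(v,\Gamma))$ --- and using that every nonempty face contains a vertex, so that iterated links of vertices exhaust the links of all nonempty faces --- I would show that $H^j_\mideal(M)$ has finite length (is concentrated in coarse degree $0$) for every $j\le r$, so that only the ``global'' part survives. This is precisely why the conclusion is restricted to $i\le r$: in this range the per-vertex contributions to the $\mu$-numbers vanish, and the right-hand side reduces to an honest alternating sum of the finite lengths of $H^1_\mideal(M),\dots,H^r_\mideal(M)$.

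Finally I would combine a Schenzel-type computation of the Hilbert series of $M/\Theta M$ with a socle argument in the spirit of Novik--Swartz: for sufficiently generic $\Theta$, each class in the (now finite length) $H^j_\mideal(M)$ contributes to $\dim_\KK(M/\Theta M)_i$ with multiplicity $\binom{d+1}{i}$, one socle class for each $i$-element subset of the parameters $\theta_1,\dots,\theta_{d+1}$. Comparing this socle lower bound with Schenzel's expression for $\tilde h_i$ in terms of $h_i(\Delta,\Gamma)$ and the lengths $\dim_\KK H^j_\mideal(M)$, and using $\tilde h_i\ge 0$, delivers the asserted inequality. The main obstacle is exactly this last step: establishing the Schenzel-type formula and the linear independence of the $\binom{d+1}{i}$ socle classes in the full generality of relative simplicial posets satisfying only $(S_r)$ on vertex links, where the module need not be Buchsbaum and one must exploit genericity of $\Theta$ together with the finite-length conclusion of the previous step. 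The Buchsbaum case is then immediate: for a pure relative poset, $(S_d)$ of all vertex links is equivalent to their Cohen--Macaulayness, i.e.\ to $(\Delta,\Gamma)$ being Buchsbaum, so the inequalities hold for all $i\le d$.
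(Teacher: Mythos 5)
There is a genuine gap, and it occurs already in your first ingredient: the claimed dictionary between the $\mu$-numbers and the low-degree pieces of the local cohomology modules $H^j_{\mideal}(M)$. The numbers $\mu_{j-1}(\Delta,\Gamma;\KK)$ are \emph{not} assembled from $\tilde H^{j-1}(\Delta,\Gamma;\KK)$ together with $\bigoplus_v\tilde H^{j-2}(\lk(v,\Delta),\lk(v,\Gamma);\KK)$. By Lemma \ref{2.2} they equal $\sum_v\tilde\sigma_{j-2}\big(\lk(v,\Delta),\lk(v,\Gamma);\KK\big)$, and the $\tilde\sigma$-numbers are weighted averages of the graded Betti numbers $\beta_{k-i,k}$ of the face modules of the links, i.e., by Hochster's \emph{Tor} formula, of the reduced Betti numbers of \emph{all induced subcomplexes} of each link, not merely of the link itself. (Concretely, the $k=n$ term alone already gives $\tilde\sigma_{i}\geq\frac{1}{n+1}\tilde b_{i}$, and the remaining terms make the $\mu$-numbers strictly larger than the Betti numbers in general --- this surplus is exactly what the paper exploits to reach the fundamental group.) Consequently, even if your local-cohomology and socle machinery went through, it would at best yield the Novik--Swartz-type bound $h_i(\Delta,\Gamma)\geq\binom{d+1}{i}\sum_{j=1}^{i}(-1)^{i-j}\tilde b_{j-1}(\Delta,\Gamma;\KK)$, which the paper explicitly notes at the end of Section 4 is a strictly weaker statement than Theorem \ref{main3}.

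A second, independent problem is the step you yourself flag as the main obstacle: producing $\binom{d+1}{i}$ independent socle classes per local cohomology class when the vertex links are only assumed to satisfy $(S_r)$, so that $M$ need not be Buchsbaum. This is not a routine genericity argument and is not what the paper does. The paper stays entirely on the Tor side: it applies the Murai--Terai theorem (Theorem \ref{4.1}) to each vertex link to obtain linear forms whose multiplication maps are injective in degrees $\leq r$, feeds this into Lemma \ref{4.2} to bound alternating sums of graded Betti numbers and hence of $\tilde\sigma$-numbers (Lemma \ref{4.3}), and then sums over all vertices using Swartz's identity $i\,h_i(\Delta,\Gamma)+(d-i+2)h_{i-1}(\Delta,\Gamma)=\sum_{v}h_{i-1}\big(\lk(v,\Delta),\lk(v,\Gamma)\big)$, which telescopes into the stated inequality. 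To salvage your plan you would need a degree-by-degree ($\ZZ^V$-graded) control of the resolution of each link's face module, at which point you would essentially be reconstructing Lemmas \ref{4.2} and \ref{4.3} rather than running a socle argument on $M$ itself.
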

Theorem \ref{main2} is then an immediate consequence of Inequality \eqref{mu-vs-m} along with the ($r=2$ \& $\Gamma= \emptyset$)-case of Theorem \ref{main3}. 

The rest of the paper is organized as follows.
In Section 2, we review basic results and definitions pertaining to simplicial complexes, including the definition of their $\mu$-numbers. Section 3 is devoted to the proof of  Theorem \ref{main1}. In Section 4 we verify Theorems \ref{main2} and \ref{main3}  for the case of simplicial complexes. Then,
in Section 5, we review the notion of simplicial posets and explain how to extend Theorems \ref{main2} and \ref{main3} to this generality.

\section{Preliminaries}
The goal of this section is to review some basics of simplicial complexes and their $\mu$-numbers.

\subsection{Simplicial complexes}
A \textbf{simplicial complex}  $\Delta$ on $V$ is a collection of subsets of $V$ that is closed under inclusion. A relative simplicial complex $\Psi$ on $V$ is a collection of subsets of $V$ with the property that there exist simplicial complexes $\Delta \supseteq \Gamma$ such that $\Psi=\Delta \setminus \Gamma$.
We identify such a pair of simplicial complexes $(\Delta,\Gamma)$ with the relative simplicial complex $\Delta \setminus \Gamma$.
A simplicial complex $\Delta$ will also be identified with $(\Delta,\emptyset)$.
The elements of $\Delta \setminus \Gamma$ are called \textbf{faces} of $(\Delta,\Gamma)$, the $1$-element faces are called \textbf{vertices}, and the maximal faces (under inclusion) are called \textbf{facets}.
The \textbf{dimension} of a face $F \in \Delta \setminus \Gamma$ is the cardinality of $F$ minus one and the dimension of $(\Delta,\Gamma)$ is the maximum dimension of its faces.
A relative simplicial complex is said to be \textbf{pure} if all of its facets have the same dimension.

Let $(\Delta,\Gamma)$ be a $(d-1)$-dimensional relative simplicial complex.
We let $f_i(\Delta,\Gamma)$ denote the number of $i$-dimensional faces of $(\Delta,\Gamma)$; in particular,
$f_{-1}(\Delta,\Gamma)$ is $1$ if $\emptyset \in \Delta \setminus \Gamma$ and $0$ if $\emptyset \not \in \Delta \setminus \Gamma$.
The components of the \textbf{$h$-vector} $h(\Delta,\Gamma)=(h_0(\Delta,\Gamma),\dots,h_d(\Delta,\Gamma))$ of $(\Delta,\Gamma)$ are defined by
$$h_i(\Delta,\Gamma)= \sum_{j=0}^i (-1)^{i-j} {d-j \choose i-j} f_{j-1}(\Delta, \Gamma) \quad \mbox{ for $i=0,1,\ldots,d$}.$$
We denote by $H_i(\Delta,\Gamma;\KK)$ the $i$th \textbf{homology} of the pair $(\Delta,\Gamma)$ computed with coefficients in a field $\KK$, and by $b_i(\Delta,\Gamma;\KK)$ the $\KK$-dimension of $H_i(\Delta,\Gamma;\KK)$. Similarly, $\tilde H_i(\Delta,\Gamma;\KK)$ and $\tilde b_i(\Delta,\Gamma;\KK)$ stand for the $i$th \textbf{reduced homology} of $(\Delta,\Gamma)$ and the $\KK$-dimension of $\tilde H_i(\Delta,\Gamma;\KK)$, respectively.

A simplicial complex $\Delta$ on $V$ gives rise to several new simplicial complexes. 
The \textbf{link} of a face $F$ in $\Delta$ is
$$\lk(F,\Delta)=\{G \in \Delta: F \cup G \in \Delta,\ F \cap G = \emptyset\}.$$ 
(We also define $\lk(F,\Delta)=\emptyset$ if $F$ is not a face of $\Delta$.)
The \textbf{induced subcomplex} of $\Delta$ on $W \subseteq V$ is
$$\Delta_W=\{ F \in \Delta: F \subseteq W\}.$$
If $v$ is not an element of $V$, then the \textbf{cone} over $\Delta$ with apex $v$ is the simplicial complex 
$$v*\Delta=\Delta \cup \{ \{v \} \cup F: F \in \Delta\}.$$

A $d$-dimensional simplicial complex $\Delta$ is a \textbf{normal pseudomanifold} (without boundary) if (1) it is pure, (2) each $(d-1)$-face of $\Delta$ is contained in exactly two facets of $\Delta$, and (3) the link of each non-empty face of dimension at most $d-2$ is connected. In particular, a triangulation of any closed manifold is a normal pseudomanifold.

A $d$-dimensional \textbf{stacked manifold} is a simplicial complex obtained by starting with several disjoint boundary complexes of the $(d+1)$-dimensional simplex and repeatedly forming connected sums and/or handle
additions. In particular, connected stacked manifolds are homeomorphic to spheres or to  connected sums of sphere bundles over the circle. Thus, when $d\geq 3$, the first Betti number of a $d$-dimensional stacked manifold $\Delta$ (computed over any field) coincides with $m(\Delta)$. This is the only property of stacked manifolds that will be used in this paper.

\subsection{The $\mu$-numbers}
We adopt the following definition of the $\mu$-numbers (cf., \cite[Section 4]{Murai-Novik-16}).
For a finite set $V$, let $\Sm_V$ be the collection of all linear orderings of the elements of $V$.
If $(\Delta,\Gamma)$ is a relative simplicial complex on $V$ with $|V|=n$ and $\varsigma=(v_1,\dots,v_n) \in \Sm_V$,
define
\begin{equation} \label{mu-def}
\mu_i^\varsigma(\Delta,\Gamma;\KK)= \sum_{k=1}^n \tilde b_{i-1}\big(\lk(v_k,\Delta_{\{v_1,\dots,v_{k}\}}),\lk(v_k, \Gamma_{\{v_1,\dots,v_{k}\}});\KK\big) \quad \mbox{for $0\leq i \leq \dim(\Delta,\Gamma)$}
\end{equation}
and
$$\mu_i(\Delta,\Gamma;\KK)= \frac 1 {n!} \sum_{\varsigma \in \Sm_V} \mu_i^\varsigma(\Delta,\Gamma;\KK) \quad \mbox{for $0\leq i \leq \dim(\Delta,\Gamma)$}.$$
When $i \leq 1$, we write $\mu_i^\varsigma(\Delta,\Gamma)$ instead of $\mu_i^\varsigma(\Delta,\Gamma;\KK)$ and $\mu_i(\Delta,\Gamma)$ instead of $\mu_i(\Delta,\Gamma;\KK)$
since these $\mu$-numbers do not depend on $\KK$.

The $\mu^\varsigma$-numbers were essentially introduced by Brehm and K\"uhnel \cite{Brehm-Kuhnel-87}
while the $\mu$-numbers are due to Bagchi and Datta \cite{Bagchi-Datta-14}.
(Note that our $\mu_0$ is slightly different than $\mu_0$ of \cite{Bagchi-Datta-14}.)
As can be seen from the above definition, these numbers are related to a version of the Morse theory for simplicial complexes
(cf.~\cite[Remark 2.11]{Bagchi-Datta-14}). In particular, they satisfy the following inequalities known as the Morse inequalities (see \cite[Lemma 4.2]{Murai-Novik-16}).

\begin{lemma}
\label{2.1}
Let $(\Delta,\Gamma)$ be a relative simplicial complex on $V$ and let $\varsigma \in \Sm_V$. Then
$$\sum_{j=0}^i (-1)^{i-j} b_j (\Delta,\Gamma;\KK) \leq \sum_{j=0}^i (-1)^{i-j} \mu^\varsigma_j(\Delta,\Gamma;\KK) \quad \mbox{ for all }i \geq 0.$$
\end{lemma}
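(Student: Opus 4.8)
The plan is to read off the inequalities from the filtration of $(\Delta,\Gamma)$ determined by the ordering $\varsigma=(v_1,\dots,v_n)$. Set $V_k=\{v_1,\dots,v_k\}$ and $X_k=(\Delta_{V_k},\Gamma_{V_k})$; then $\varsigma$ produces an increasing chain of relative subcomplexes $X_0\subseteq X_1\subseteq\cdots\subseteq X_n=(\Delta,\Gamma)$ and hence a filtration $F_k=C_*(X_k;\KK)$, by subcomplexes, of the unreduced relative simplicial chain complex $C=C_*(\Delta,\Gamma;\KK)$. Working with the unreduced complex ensures that $F_0=0$ and that $H_*(C;\KK)$ is exactly the homology whose dimensions are the $b_j(\Delta,\Gamma;\KK)$ of the statement. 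The Lemma will follow once I (i) identify the homology of the successive quotients $F_k/F_{k-1}$ with reduced homology of vertex links, so that they reproduce the $\mu^\varsigma$-numbers, and (ii) invoke the algebraic Morse inequalities for a filtered chain complex of finite-dimensional $\KK$-vector spaces.

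First I would carry out the link identification. Because $\Delta_{V_{k-1}}$ consists of exactly those faces of $\Delta_{V_k}$ avoiding $v_k$, the quotient $F_k/F_{k-1}$ is spanned by the faces of $\Delta_{V_k}\setminus\Gamma_{V_k}$ that contain $v_k$. Sending such a face $F$ to $F\setminus\{v_k\}$ identifies $F_k/F_{k-1}$, as a chain complex, with the reduced relative chain complex of $(\lk(v_k,\Delta_{V_k}),\lk(v_k,\Gamma_{V_k}))$ shifted up by one, the empty face of the link corresponding to the $0$-chain $\{v_k\}$. The subtle point to get right is precisely this shift together with the reduced-versus-unreduced bookkeeping: the ambient complex $C$ is unreduced, yet the empty face of each link survives in the quotient as the honest $0$-chain $\{v_k\}$, so the quotients compute \emph{reduced} link homology. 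This is what matches the reduced Betti numbers $\tilde b_{i-1}$ in the definition of $\mu_i^\varsigma$ against the unreduced $b_j$ of the statement. After checking that the induced boundary agrees with the reduced boundary of the link (every facet of $F$ missing $v_k$ becomes zero in the quotient), one obtains
$$\dim_\KK H_i\bigl(F_k/F_{k-1}\bigr)=\tilde b_{i-1}\bigl(\lk(v_k,\Delta_{V_k}),\lk(v_k,\Gamma_{V_k});\KK\bigr),$$
and summing over $k$ gives $\sum_{k=1}^n\dim_\KK H_i(F_k/F_{k-1})=\mu_i^\varsigma(\Delta,\Gamma;\KK)$.

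Next I would establish the algebraic Morse inequalities for $C$. Write $P_D(t)=\sum_i(\dim_\KK H_i(D;\KK))\,t^i$. For a short exact sequence $0\to A\to C\to B\to 0$ of finite complexes, analyzing the long exact sequence in homology with the rank-nullity theorem yields the identity $P_A(t)+P_B(t)-P_C(t)=(1+t)R(t)$, where $R$ has nonnegative coefficients, the coefficient of $t^i$ being the rank of the connecting map $H_{i+1}(B)\to H_i(A)$. Applying this to $A=F_{k-1}\subseteq F_k$ and inducting on the length of the filtration gives $\sum_k P_{F_k/F_{k-1}}(t)-P_C(t)=(1+t)\widetilde R(t)$ for some $\widetilde R(t)=\sum_i\tilde r_i t^i$ with $\tilde r_i\ge 0$. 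By the previous step the left-hand side is $\sum_i(\mu_i^\varsigma-b_i)t^i$, so $\mu_i^\varsigma-b_i=\tilde r_i+\tilde r_{i-1}$, and the alternating partial sum telescopes to $\sum_{j=0}^i(-1)^{i-j}(\mu_j^\varsigma-b_j)=\tilde r_i\ge 0$, which is the asserted inequality.

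The genuinely substantive step is the link identification: verifying the chain-level isomorphism between each filtration quotient and the shifted reduced relative complex of the corresponding vertex link, and pinning down the degree shift and the reduced-versus-unreduced conventions so that the reduced $\tilde b_{i-1}$ of the $\mu$-numbers line up with the unreduced $b_j$ in the statement. Once that is in place, the remainder is the routine passage from a filtered complex to Morse inequalities via the $(1+t)$-divisibility of $\sum_i\mu_i^\varsigma t^i-\sum_i b_i t^i$.
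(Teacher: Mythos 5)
Your argument is correct, and it is essentially the proof that the paper defers to (it cites \cite[Lemma 4.2]{Murai-Novik-16} rather than proving the lemma): filter by the initial segments $\{v_1,\dots,v_k\}$, identify the $k$th filtration quotient via excision/the map $F\mapsto F\setminus\{v_k\}$ with the shifted reduced relative chain complex of $\bigl(\lk(v_k,\Delta_{V_k}),\lk(v_k,\Gamma_{V_k})\bigr)$, and then apply the standard $(1+t)$-divisibility form of the Morse inequalities. Your handling of the two delicate points --- the degree shift with the empty link face accounting for the reduced $\tilde b_{i-1}$ against the unreduced $b_j$, and the telescoping of the alternating sums --- is accurate.
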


Note that since $\mu_j(\Delta,\Gamma;\KK)$ is the average of $\mu^\varsigma_j(\Delta,\Gamma;\KK)$ over all orderings $\varsigma\in \Sm_V$,
the above inequalities continue to hold with the $\mu^\varsigma$-numbers replaced by the $\mu$-numbers.

\subsection{Stanley--Reisner rings and modules}

We now turn to reviewing the Stanley--Reisner modules and their connection to the $\mu$-numbers.
Let $\Delta$ be a simplicial complex on $V$ and let $S=\KK[x_v:v \in V]$ be the polynomial ring over a field $\KK$ with $\deg x_v =1$ for all $v \in V$.
The ideal
$$I_\Delta=(x_{v_1} x_{v_2} \cdots x_{v_k}: \{v_1,v_2,\dots,v_k\} \not \in \Delta) \subseteq S$$ is called the \textbf{Stanley--Reisner ideal} of $\Delta$ 
and the ring $\KK [\Delta]= S/I_\Delta$ is called the \textbf{Stanley--Reisner ring} of $\Delta$. 
Similarly, if $(\Delta,\Gamma)$ is a relative simplicial complex, then the \textbf{Stanley--Reisner module} of $(\Delta,\Gamma)$ is the $S$-module
$$\KK[\Delta,\Gamma] = I_\Gamma/I_\Delta.$$

When $M$ is a finitely generated graded $S$-module,
the numbers
$$\beta_{i,j}(M)=\dim_\KK \Tor_i^S(M,\KK)_j$$
are called the \textbf{graded Betti numbers} of $M$.
Here $N_k$ denotes the degree-$k$ graded component of a graded $S$-module $N$.
For a relative simplicial complex $(\Delta,\Gamma)$ on $V$ with $|V|=n$,
we define the $\tilde \sigma$-numbers of  $(\Delta,\Gamma)$ by
$$\tilde \sigma _{i-1}(\Delta,\Gamma;\KK)= \frac 1 {n+1} \sum_{k=0}^n \frac 1 {{n \choose k}}
\beta_{k-i,k}(\KK[\Delta,\Gamma]).$$
We will rely on the following interpretation of  the $\mu$-numbers in terms of the $\tilde \sigma$-numbers, see \cite[Sections 4 \& 6]{Murai-Novik-16}. This interpretation is a consequence of Hochster's formula for the graded Betti numbers of Stanley--Reisner modules.

\begin{lemma}
\label{2.2}
Let $(\Delta,\Gamma)$ be a relative simplicial complex on $V$. Then
$$\mu_i(\Delta,\Gamma;\KK)=\sum_{v \in V} \tilde \sigma_{i-1}\big(\lk(v,\Delta), \lk(v,\Gamma);\KK\big) \quad \mbox{for } i \geq 0.$$
\end{lemma}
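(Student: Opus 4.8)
The plan is to compute $\mu_i$ directly from its definition as the average of the $\mu_i^\varsigma$ over all orderings $\varsigma\in\Sm_V$, and then to match the outcome against the definition of $\tilde\sigma_{i-1}$ by means of Hochster's formula. Throughout I fix the field $\KK$, suppress it from the notation, and write $n=|V|$. First I would expand $\mu_i=\frac1{n!}\sum_{\varsigma}\mu_i^\varsigma$ and regroup the terms of \eqref{mu-def} according to the vertex occupying position $k$ and the set formed by the first $k$ vertices. The key combinatorial observation is that, for a fixed vertex $v$ and a fixed set $S\subseteq V$ with $v\in S$ and $|S|=k$, the number of orderings $\varsigma=(v_1,\dots,v_n)$ with $v_k=v$ and $\{v_1,\dots,v_k\}=S$ equals $(k-1)!\,(n-k)!$. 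Using $\frac{(k-1)!(n-k)!}{n!}=\frac{1}{n\binom{n-1}{k-1}}$, this gives
$$\mu_i(\Delta,\Gamma)=\frac1n\sum_{k=1}^{n}\frac1{\binom{n-1}{k-1}}\sum_{v\in V}\ \sum_{\substack{S\ni v\\ |S|=k}}\tilde b_{i-1}\big(\lk(v,\Delta_S),\lk(v,\Gamma_S)\big).$$

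Next I would invoke the elementary identity $\lk(v,\Delta_S)=(\lk(v,\Delta))_{S\setminus\{v\}}$ (and likewise for $\Gamma$), which is immediate from the definitions of link and induced subcomplex, so that each inner term becomes a reduced Betti number of a restriction of the relative link $(\lk(v,\Delta),\lk(v,\Gamma))$. Substituting $W=S\setminus\{v\}$ and pulling the sum over $v$ to the outside reduces the lemma to the per-vertex identity
$$\tilde\sigma_{i-1}\big(\lk(v,\Delta),\lk(v,\Gamma)\big)=\frac1n\sum_{\ell=0}^{n-1}\frac1{\binom{n-1}{\ell}}\sum_{\substack{W\subseteq V\setminus\{v\}\\ |W|=\ell}}\tilde b_{i-1}\big((\lk(v,\Delta),\lk(v,\Gamma))_W\big),$$
where the link is viewed as a relative complex on the ground set $V\setminus\{v\}$ of size $n-1$, so that $\frac1n=\frac{1}{(n-1)+1}$ is exactly the normalizing factor in the definition of $\tilde\sigma$.

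To finish, I would apply the relative version of Hochster's formula, $\beta_{j,\ell}(\KK[\lk(v,\Delta),\lk(v,\Gamma)])=\sum_{|W|=\ell}\tilde b_{\ell-j-1}((\lk(v,\Delta),\lk(v,\Gamma))_W)$, with $j=\ell-i$; then the homological degree $\ell-j-1$ equals $i-1$, the inner sum over $W$ of size $\ell$ is precisely $\beta_{\ell-i,\ell}$, and the right-hand side above collapses to the definition of $\tilde\sigma_{i-1}$ on the ground set $V\setminus\{v\}$. The main obstacle is essentially the bookkeeping: pinning down the orbit-count $(k-1)!(n-k)!$ together with its simplification, and aligning the degree shift in Hochster's formula with the index $i-1$. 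One subtlety to flag is the choice of ground set for the link, since some elements of $V\setminus\{v\}$ may fail to be vertices of $\lk(v,\Delta)$; here one uses that $\tilde\sigma_{i-1}$ is invariant under adjoining ghost vertices (a short computation via the Koszul shift $\beta'_{k-i,k}=\beta_{k-i,k}+\beta_{k-i-1,k-1}$ and the identity $\binom{m+1}{k}^{-1}+\binom{m+1}{k+1}^{-1}=\frac{m+2}{m+1}\binom{m}{k}^{-1}$), so the statement is independent of this choice and the matching above is legitimate.
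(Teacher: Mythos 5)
Your argument is correct and complete: the orbit count $(k-1)!\,(n-k)!$, the identity $\lk(v,\Delta_S)=(\lk(v,\Delta))_{S\setminus\{v\}}$, the application of the relative Hochster formula with $j=\ell-i$, and the ghost-vertex invariance of $\tilde\sigma$ all check out. The paper itself gives no proof of Lemma \ref{2.2}, citing instead \cite{Murai-Novik-16} and noting it is a consequence of Hochster's formula for Stanley--Reisner modules, which is exactly the route you take.
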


\section{The $\mu$-numbers and the fundamental group}

In this section we prove Theorem \ref{main1}. This will require a few lemmas.

\begin{lemma}
\label{3.1}
Let $\Delta$ and $\Gamma$ be simplicial complexes. Assume further that $\Delta$ is connected, $\Gamma$ is contractible, and  
$\Delta \cap \Gamma$ has exactly two connected components. Then
$$m(\Delta \cup \Gamma) \leq m(\Delta)+1.$$
\end{lemma}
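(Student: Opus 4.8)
## Proof Plan for Lemma 3.1

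The natural approach is to invoke the Seifert--van Kampen theorem applied to the decomposition $\Delta \cup \Gamma = \Delta \cup \Gamma$, using $\Delta$ and $\Gamma$ as the two open (up to thickening) pieces whose intersection is $\Delta \cap \Gamma$. The hypotheses are precisely engineered so that van Kampen applies cleanly: $\Gamma$ is contractible (so $\pi_1(\Gamma)$ is trivial), $\Delta$ is connected, and $\Delta \cap \Gamma$ has exactly two connected components. The only genuine subtlety is that the standard statement of van Kampen requires \emph{path-connected} intersection, so I first need to address the disconnectedness of $\Delta\cap\Gamma$.

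First I would set up the topology carefully. Write $X = \Delta \cup \Gamma$, and let $A$ and $B$ be open neighborhoods of (the geometric realizations of) $\Delta$ and $\Gamma$ respectively that deformation retract onto them, so that $A \cap B$ deformation retracts onto $\Delta \cap \Gamma$. Thus $\pi_1(A) \cong \pi_1(\Delta)$, $\pi_1(B) \cong \pi_1(\Gamma) = 1$, and $A\cap B$ has two path-components, say with basepoints $p_0$ and $p_1$. The plan is to use the version of Seifert--van Kampen that handles non-connected intersections (equivalently, the fundamental groupoid version, or the formulation via a spanning tree / graph of groups). Since $\Gamma$ is contractible, it is in particular path-connected, so I may choose a path $\gamma$ in $\Gamma$ from $p_0$ to $p_1$; similarly choose a path $\delta$ in $\Delta$ from $p_0$ to $p_1$ using connectedness of $\Delta$. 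The loop $\delta \cdot \gamma^{-1}$ based at $p_0$ represents an element of $\pi_1(X, p_0)$.

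The key computation: the groupoid form of van Kampen gives $\pi_1(X,p_0)$ as the pushout of $\pi_1(\Delta)$ and $\pi_1(\Gamma)=1$ over the fundamental groupoid of $A\cap B$ restricted to the two basepoints. Concretely, amalgamating over a \emph{connected} piece would just glue along $\pi_1$ of that piece; the effect of the \emph{second} component is to introduce exactly one extra generator, namely the class of $\delta\cdot\gamma^{-1}$, with no new relations beyond those already present. More precisely, I expect
$$\pi_1(X) \cong \big(\pi_1(\Delta) * \langle t \rangle\big) / N,$$
where $t$ corresponds to $\delta\cdot\gamma^{-1}$ and $N$ is the normal closure of relations coming from $\pi_1(A\cap B)$; since $\Gamma$ is simply connected these relations only identify images of $\pi_1$ of the two components of $\Delta\cap\Gamma$ inside $\pi_1(\Delta)$ (conjugated by $t$) and add nothing that increases the generator count. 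Consequently $\pi_1(X)$ is generated by a generating set of $\pi_1(\Delta)$ together with the single element $t$, yielding
$$m(\Delta\cup\Gamma) \le m(\Delta) + 1.$$

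The main obstacle is getting the bookkeeping of the non-connected van Kampen theorem exactly right — specifically, verifying that the two components of $\Delta\cap\Gamma$, after gluing through the simply connected $\Gamma$, contribute \emph{at most one} new generator rather than two, and contribute no obstruction to surjectivity of the natural map from $\langle \text{generators of }\pi_1(\Delta), t\rangle$. The cleanest way to dispatch this is the graph-of-groups picture: the "graph" has one vertex for $\Delta$ (carrying $\pi_1(\Delta)$), one vertex for $\Gamma$ (carrying the trivial group), and two edges (one per component of $\Delta\cap\Gamma$); the fundamental group of such a graph of groups is generated by the vertex groups together with one stable letter per edge outside a spanning tree. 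A spanning tree uses one of the two edges, leaving exactly one extra generator from the remaining edge, which is precisely $t$. I would therefore structure the proof around this graph-of-groups computation, as it makes the bound $m(\Delta)+1$ transparent and isolates the counting argument from the point-set technicalities.
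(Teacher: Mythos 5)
Your argument is correct, but it routes the bookkeeping through heavier machinery than the paper does. The paper avoids the non-connected-intersection (groupoid/graph-of-groups) form of Seifert--van Kampen entirely by a redistricting trick: since $\Delta$ is connected, one chooses a simplicial path $\gamma\subset\Delta$ joining the two components $\Sigma_1,\Sigma_2$ of $\Delta\cap\Gamma$ whose interior vertices avoid $\Gamma$, and sets $\Gamma'=\Gamma\cup\gamma$. Then $\Gamma'$ is a contractible complex with an arc attached at two points, hence homotopy equivalent to $\mathbb{S}^1$, and $\Delta\cap\Gamma'=\Sigma_1\cup\Sigma_2\cup\gamma$ is connected; the classical van Kampen theorem applied to $\Delta\cup\Gamma'=\Delta\cup\Gamma$ exhibits $\pi_1(\Delta\cup\Gamma)$ as a quotient of $\pi_1(\Delta)*\pi_1(\Gamma')=\pi_1(\Delta)*\ZZ$, which is generated by $m(\Delta)+1$ elements. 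Your loop $\delta\cdot\gamma^{-1}$ is precisely the generator of $\pi_1(\Gamma')\cong\ZZ$ in this picture, so both proofs produce the same extra generator; the difference is only in which version of the theorem carries the argument. Your graph-of-groups count (two vertices carrying $\pi_1(\Delta)$ and the trivial group, two edges, one edge outside a spanning tree, hence one stable letter) is the correct way to make the generalized statement precise, and it does yield $\pi_1(X)$ as a quotient of $\pi_1(\Delta)*\langle t\rangle$; but as written it leans on an ``I expect'' for the presentation, and to be complete you would need to cite the groupoid version and check that the edge-group relations contribute no new generators. The paper's reduction buys you the textbook connected-intersection statement at the cost of one small geometric observation, which is why it is shorter; your version generalizes more readily to intersections with more components, where the same count gives $m(\Delta)+(\text{number of components})-1$ --- which is in fact exactly what Lemma \ref{3.2} extracts by iterating Lemma \ref{3.1}.
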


\begin{proof}
Denote the two connected components of $\Delta \cap \Gamma$ by
$\Sigma_1$ and $\Sigma_2$.
Since $\Delta$ is connected, there is a path 
$$\gamma=\{\{v_1,v_2\},\{v_2,v_3\},\dots,\{v_{s-1},v_s\}\} \subset \Delta$$
such that $v_1 \in \Sigma_1$, $v_s\in \Sigma_2$, and $v_2,\dots,v_{s-1}$ do not belong to $\Gamma$.
Let $\Gamma' = \Gamma \cup \gamma$.
Since $\Gamma$ is contractible, $\Gamma'$ is homotopy equivalent to $\mathbb S^1$.
Furthermore, $\Delta \cap \Gamma' = \Sigma_1 \cup \Sigma_2 \cup \gamma$ is connected.
Therefore, by the Seifert--van Kampen theorem, the fundamental group $\pi_1(\Delta \cup \Gamma')$ is a quotient of the free product $\pi_1(\Delta) * \pi_1(\Gamma')=\pi_1(\Delta)* \ZZ$. 
Since $\Delta \cup \Gamma'=\Delta \cup \Gamma$,
this fact implies the desired inequality.
\end{proof}

\begin{lemma}
\label{3.2}
Let $\Delta$ be a connected simplicial complex, let $\Gamma_1,\Gamma_2,\dots,\Gamma_s$ be several connected and pairwise disjoint subcomplexes of $\Delta$, and let $v$ be an element that is not in $\Delta$.
Then
$$m\big(\Delta \cup (v*(\Gamma_1 \cup \cdots \cup \Gamma_s))\big) \leq m(\Delta)+s-1.$$
\end{lemma}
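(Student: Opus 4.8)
The plan is to reduce the statement about coning off $s$ disjoint connected subcomplexes to $s-1$ successive applications of Lemma~\ref{3.1}. The key observation is that attaching the cone $v * (\Gamma_1 \cup \cdots \cup \Gamma_s)$ to $\Delta$ can be performed one ``wedge'' at a time: since $v * (\Gamma_1 \cup \cdots \cup \Gamma_s) = (v*\Gamma_1) \cup \cdots \cup (v*\Gamma_s)$, I would build up the union by adding the cones $v*\Gamma_1, v*\Gamma_2, \dots, v*\Gamma_s$ in turn, and track how the minimum number of generators of the fundamental group grows at each step.

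First I would set $\Delta_0 = \Delta$ and, for $t=1,\dots,s$, define $\Delta_t = \Delta \cup (v*(\Gamma_1 \cup \cdots \cup \Gamma_t))$, so that $\Delta_0 = \Delta$ and $\Delta_s$ is the complex in the statement. The base case is harmless: passing from $\Delta_0$ to $\Delta_1 = \Delta \cup (v*\Gamma_1)$ only glues a cone (which is contractible) along the connected subcomplex $\Gamma_1$, so by Seifert--van Kampen $\pi_1(\Delta_1)$ is a quotient of $\pi_1(\Delta) * \pi_1(v*\Gamma_1) = \pi_1(\Delta)$, giving $m(\Delta_1) \leq m(\Delta)$. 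The inductive step is where Lemma~\ref{3.1} enters. To pass from $\Delta_{t-1}$ to $\Delta_t$, I would write $\Delta_t = \Delta_{t-1} \cup (v*\Gamma_t)$ and apply Lemma~\ref{3.1} with the roles played by $\Delta_{t-1}$ (connected, since $\Delta$ is connected and each added cone shares the vertex $v$) and the contractible cone $\Gamma := v*\Gamma_t$. The crucial point to verify is that the intersection $\Delta_{t-1} \cap (v*\Gamma_t)$ has exactly two connected components: one component is $\Gamma_t$ itself (sitting inside $\Delta \subseteq \Delta_{t-1}$), and the other is the apex $v$ together with whatever faces of $v*\Gamma_t$ already lie in $\Delta_{t-1}$, namely $v*(\Gamma_1 \cup \cdots \cup \Gamma_{t-1})$ intersected with $v*\Gamma_t$, which is just the vertex $\{v\}$ because the $\Gamma_j$ are pairwise disjoint.

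This intersection analysis is the step I expect to be the main obstacle, and it requires care about the pairwise-disjointness hypothesis. Concretely, a face of $v*\Gamma_t$ either lies in $\Gamma_t$ or contains $v$; a face containing $v$ belongs to $\Delta_{t-1}$ only if it equals $\{v\}$ or if its complement-of-$v$ part lies in some $\Gamma_j$ with $j<t$, but by disjointness $\Gamma_j \cap \Gamma_t = \emptyset$ forces that part to be empty. Hence the $v$-containing faces of the intersection reduce to the single vertex $\{v\}$, and the faces not containing $v$ form exactly $\Gamma_t$. Since $\Gamma_t$ is connected and $\{v\}$ is a separate connected component (the vertex $v$ is not joined to $\Gamma_t$ within $\Delta_{t-1}$, as $\Gamma_t$ is a subcomplex of $\Delta$ and $v \notin \Delta$), the intersection has exactly the two required components. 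Lemma~\ref{3.1} then yields $m(\Delta_t) \leq m(\Delta_{t-1}) + 1$. Iterating from $t=2$ to $t=s$ and combining with the base bound $m(\Delta_1)\leq m(\Delta)$ gives
\[
m(\Delta_s) \leq m(\Delta_1) + (s-1) \leq m(\Delta) + s-1,
\]
which is precisely the claimed inequality.
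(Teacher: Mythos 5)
Your proposal is correct and follows essentially the same route as the paper: peel off the cones $v*\Gamma_t$ one at a time, using Seifert--van Kampen for the first cone (glued along the connected $\Gamma_1$) and Lemma~\ref{3.1} for each subsequent one, after checking that the relevant intersection is exactly $\Gamma_t$ together with the isolated vertex $v$. The only difference is cosmetic (explicit induction versus the paper's recursion on $s$), and your careful verification that pairwise disjointness forces the $v$-containing part of the intersection to reduce to $\{v\}$ is a welcome elaboration of a step the paper only asserts.
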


\begin{proof} Assume first that $s\geq 2$.
Since $v * \Gamma_s$ is contractible and since its intersection with $\Delta \cup (v*(\Gamma_1 \cup \cdots \cup \Gamma_{s-1}))$ consists of two connected components, namely, the vertex $v$ and the complex $\Gamma_s$, we obtain from Lemma \ref{3.1} that
$$m\big(\Delta \cup (v*(\Gamma_1 \cup \cdots \cup \Gamma_s))\big)
\leq m\big(\Delta \cup (v*(\Gamma_1 \cup \cdots \cup \Gamma_{s-1}))\big) +1.$$ Thus, to complete the proof, it suffices to show that $m(\Delta \cup (v*\Gamma_1)) \leq m(\Delta)$. Indeed, since $\Delta \cap (v*\Gamma_1)=\Gamma_1$ is connected and $v * \Gamma_1$ is contractible, this inequality is a consequence of the Seifert--van Kampen theorem. The statement follows.
\end{proof}

For the rest of this section, it is convenient to extend the definition of $m(\Delta)$ to the class of all simplicial complexes (including the disconnected ones) by letting $m(\Delta)$ be the sum of the $m$-numbers of the connected components of $\Delta$. With this definition in hand, we can state the following result that together with Theorem \ref{LBT:mu} immediately implies the inequality part of Theorem \ref{main1}.

\begin{theorem}
\label{3.3}
Let $\Delta$ be a simplicial complex on $V$ and let $\varsigma=(v_1,\dots,v_n) \in \Sm_V$.
Then
$$\mu_1^\varsigma(\Delta) -\mu_0^\varsigma(\Delta) \geq m(\Delta)-b_0(\Delta).$$
In particular, if $\Delta$ is connected, then $\mu_1(\Delta) -\mu_0(\Delta)+1 \geq m(\Delta)$.
\end{theorem}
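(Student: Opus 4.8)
The plan is to build $\Delta$ up one vertex at a time along the ordering $\varsigma$ and to track how $m$ and $b_0$ change at each step, comparing these changes against the step-by-step contributions to $\mu_0^\varsigma$ and $\mu_1^\varsigma$. For $0\le k\le n$ write $\Delta^{(k)}:=\Delta_{\{v_1,\dots,v_k\}}$ and $L_k:=\lk(v_k,\Delta^{(k)})$, so that $L_k$ is a subcomplex of $\Delta^{(k-1)}$ and $\Delta^{(k)}=\Delta^{(k-1)}\cup(v_k*L_k)$, the new piece being the contractible cone $v_k*L_k$ glued to $\Delta^{(k-1)}$ along $L_k$. By definition \eqref{mu-def}, the step-$k$ contributions to $\mu_0^\varsigma$ and $\mu_1^\varsigma$ are $\tilde b_{-1}(L_k)$ and $\tilde b_0(L_k)$, so that $\mu_1^\varsigma(\Delta)-\mu_0^\varsigma(\Delta)=\sum_{k=1}^n(\tilde b_0(L_k)-\tilde b_{-1}(L_k))$. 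Since $\Delta^{(0)}=\{\emptyset\}$ and $\Delta^{(n)}=\Delta$ satisfy $m(\Delta^{(0)})=b_0(\Delta^{(0)})=0$, it suffices to prove the per-step estimate $\delta m_k-\delta b_k\le \tilde b_0(L_k)-\tilde b_{-1}(L_k)$ and sum it telescopically, where $\delta m_k:=m(\Delta^{(k)})-m(\Delta^{(k-1)})$ and $\delta b_k:=b_0(\Delta^{(k)})-b_0(\Delta^{(k-1)})$.

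To analyze a single step (we may assume $v_k$ is a vertex of $\Delta$, since otherwise nothing changes) I would introduce two counts: let $c_k$ be the number of connected components of $L_k$, and let $p_k$ be the number of connected components of $\Delta^{(k-1)}$ that meet $L_k$. A direct computation of reduced Betti numbers gives $\tilde b_0(L_k)-\tilde b_{-1}(L_k)=c_k-1$ in all cases (when $L_k=\{\emptyset\}$ both sides equal $-1$). On the homological side, gluing in the cone merges exactly the $p_k$ touched components of $\Delta^{(k-1)}$ into one and creates no new component, so $\delta b_k=1-p_k$ (with $p_k=0$ in the isolated-vertex case $L_k=\{\emptyset\}$). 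The heart of the argument is the bound $\delta m_k\le c_k-p_k$. I would prove it by attaching the components $\Gamma_1,\dots,\Gamma_{c_k}$ of $L_k$ to the apex $v_k$ one at a time: the first time a given component of $\Delta^{(k-1)}$ is touched, either the intersection is the connected set $\Gamma_j$ (when $v_k$ first enters, which is the contractible-cone case of Lemma \ref{3.2}), or it is the set consisting of $\{v_k\}$ together with $\Gamma_j$ along which we fuse two previously separate components—whose nerve is a tree—so that in either case $m$ does not increase; each of the remaining $c_k-p_k$ attachments glues the contractible cone along the two-piece set $\{v_k\}$ and $\Gamma_j$ inside a single already-connected complex, and hence raises $m$ by at most one by Lemma \ref{3.1}. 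Equivalently, $\Delta^{(k)}$ arises from $\Delta^{(k-1)}$ and the contractible cone via a graph-of-groups construction whose underlying graph has first Betti number $c_k-p_k$.

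Combining these three facts, for every $k$ we get $\delta m_k-\delta b_k\le (c_k-p_k)-(1-p_k)=c_k-1=\tilde b_0(L_k)-\tilde b_{-1}(L_k)$, which is exactly the per-step estimate. Summing over $k$ and telescoping yields $\mu_1^\varsigma(\Delta)-\mu_0^\varsigma(\Delta)\ge m(\Delta)-b_0(\Delta)$, as claimed. Finally, since $\mu_0(\Delta)$ and $\mu_1(\Delta)$ are the averages of $\mu_0^\varsigma$ and $\mu_1^\varsigma$ over $\varsigma\in\Sm_V$, averaging the inequality over all orderings preserves it, and specializing to connected $\Delta$ (so that $b_0(\Delta)=1$) gives $\mu_1(\Delta)-\mu_0(\Delta)+1\ge m(\Delta)$.

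I expect the main obstacle to be the bound $\delta m_k\le c_k-p_k$. The bookkeeping is delicate precisely because the intermediate complexes $\Delta^{(k-1)}$ need not be connected and because the single apex $v_k$ is shared among all the cone pieces, so one must order the attachments so that each glues along a connected (or controlled two-piece) set in order to apply the Seifert--van Kampen lemmas, while simultaneously accounting for the drop in $b_0$ produced by merging components; it is exactly the presence of the $-b_0$ and $-\mu_0^\varsigma$ terms in the statement that absorbs this merging.
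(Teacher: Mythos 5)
Your proposal is correct and follows essentially the same route as the paper: the paper's induction on the last vertex of $\varsigma$ is exactly your telescoping over vertex insertions, and its key inequality \eqref{3-1} is your per-step estimate, proved by the same component bookkeeping (your $c_k$ and $p_k$ are the paper's $\sum_i s_i$ and $\ell$) via Lemmas \ref{3.1} and \ref{3.2}. The only cosmetic difference is that the paper groups the link components by the component of $\Delta^{(k-1)}$ containing them and glues the resulting pieces at the single vertex $v_k$, whereas you attach the cone pieces one at a time and handle the component-fusing attachments directly.
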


\begin{proof} The ``in particular" part follows from the first part since $\mu_1(\Delta) -\mu_0(\Delta)+1$ is the average of $\mu^\varsigma_1(\Delta) -\mu^\varsigma_0(\Delta)+1$ over all orderings $\varsigma$.
To prove the first part, we use induction on $|V|=n$.
The desired inequality does hold when $\Delta=\{ \emptyset\}$ (with both sides equal to $0$).
Thus we may assume that $n \geq 1$.

Let $\varsigma'=(v_1,\dots,v_{n-1})$ and $\Delta'=\Delta_{\{v_1,\dots,v_{n-1}\}}$.
Then $\Delta=\Delta' \cup (v_n* \lk(v_n,\Delta))$ and
$$\mu_1^\varsigma(\Delta)-\mu_0^\varsigma(\Delta) =\mu_1^{\varsigma'}(\Delta')-\mu_0^{\varsigma'}(\Delta')+ \big(\tilde b_0(\lk(v_n,\Delta))-\tilde b_{-1}(\lk(v_n,\Delta))\big).$$
Therefore, by the induction hypothesis, to complete the proof,
it suffices to show that
\begin{align}
\label{3-1}
m(\Delta')-b_0(\Delta') + \big(\tilde b_0(\lk(v_n,\Delta)) - \tilde b_{-1}(\lk(v_n,\Delta))\big)
\geq m(\Delta)-b_0(\Delta).
\end{align}
If $\lk(v_n,\Delta)=\{ \emptyset\}$,
then $\Delta$ is the disjoint union of $\Delta'$ and the vertex $v_n$. Hence $m(\Delta)=m(\Delta')$ and $b_0(\Delta)=b_0(\Delta')+1$, while $\tilde b_0(\lk(v_n,\Delta)) - \tilde b_{-1}(\lk(v_n,\Delta))=0-1=-1$, yielding that \eqref{3-1} holds as equality in this case.

Suppose $\lk(v_n,\Delta) \ne \{\emptyset\}$.
Let $t=b_0(\Delta')$ and let $\Gamma_1,\dots,\Gamma_t$ be the connected components of $\Delta'$.
Let $\Sigma_i=\Gamma_i \cap \lk(v_n,\Delta)$ for $i=1,2,\dots,t$.
Without loss of generality, assume that the number of connected components of $\Sigma_i$ is $s_i \geq 1$ for $i=1,2,\dots,\ell$ and that $\Sigma_i$ is empty for $i > \ell$.
Then 
\begin{eqnarray}
\label{3-2}
&&\sum_{k=1}^\ell s_i = b_0(\lk(v_n,\Delta))= \tilde b_0(\lk(v_n,\Delta))- \tilde b_{-1}(\lk(v_n,\Delta)) +1,\\
\label{3-3}
&&b_0(\Delta')-b_0(\Delta)=\ell-1, \\
\label{3-4}
&&m\big(\Gamma_i \cup (v_n* \lk(v_n,\Delta))\big)\leq m(\Gamma_i) + s_i-1 \quad \mbox{for $i=1,2,\dots,\ell$},
\end{eqnarray}
where the last inequality follows from Lemma \ref{3.2}. 
Since
\begin{align*}
\Delta&=\Delta' \cup (v_n* \lk(v_n,\Delta))\\
&= \big(\Gamma_1 \cup (v_n*\Sigma_1)\big) \cup \cdots \cup \big(\Gamma_\ell \cup (v_n*\Sigma_\ell)\big) 
\cup \Gamma_{\ell+1} \cup \cdots \cup \Gamma_t,
\end{align*}
and since $\Gamma_k\cup (v_n* \Sigma_k)$ and $\Gamma_{k'}\cup (v_n*\Sigma_{k'})$ intersect in the same single vertex $v_n$ for all $1\le k < k'\le \ell$, we infer from 
\eqref{3-2}--\eqref{3-4} (and the Seifert-van Kampen theorem) that
\begin{align*}
m(\Delta) & = m\big(\Gamma_1 \cup (v_n*\Sigma_1)\big) + \cdots + m\big(\Gamma_\ell \cup (v_n*\Sigma_\ell)\big) + m(\Gamma_{\ell+1})+ \cdots + m(\Gamma_t)\\
&\leq \sum_{k=1}^t m(\Gamma_k) + \sum_{k=1}^\ell (s_k-1)\\
&= m(\Delta') + \big(\tilde b_0(\lk(v_n,\Delta))-\tilde b_{-1}(\lk(v_n,\Delta))\big) +\big(b_0(\Delta)-b_0(\Delta')\big),
\end{align*}
and  Inequality \eqref{3-1} follows.
\end{proof}

To complete the poof of Theorem \ref{main1}, it only remains to discuss the case of equality. Since $g_2(\Delta)\geq {d+2 \choose 2}(\mu_1(\Delta)-\mu_0(\Delta)+1)\geq {d+2 \choose 2}m(\Delta)$, the statement follows easily from the equality case of Theorem \ref{LBT:mu} along with the observation that for a stacked manifold $\Delta$ of dimension $\geq 3$, $m(\Delta)=b_1(\Delta)$ (see the end of Section 2.1). 
\hfill$\square$\medskip

It is known that if $\Delta$ is a $3$-dimensional connected normal pseudomanifold, then $g_2(\Delta)=10 b_1(\Delta)$ if and only if $\Delta$ is a stacked manifold. However, the class of $3$-dimensional normal pseudomanifolds that satisfy $g_2(\Delta)=10 (\mu_1(\Delta)-\mu_0(\Delta)+1)$ is larger: it consists of simplicial manifolds all of whose vertex links are stacked spheres, and, for instance, includes the boundary complexes of $4$-dimensional cyclic polytopes. (See the proof of \cite[Theorem 5.3]{Murai-15} for both statements.) It would be interesting to understand which $3$-dimensional connected normal pseudomanifolds satisfy the inequality $g_2(\Delta)\geq 10m(\Delta)$ of Theorem \ref{main1} as equality.

It is also worth mentioning that the proof of Theorem \ref{LBT:mu} given in \cite{Murai-15}, in fact, shows that the inequality part of this theorem applies to a larger class of complexes; specifically, it holds
for an arbitrary $d$-dimensional pure simplicial complex all of whose vertex links are generically $d$-rigid.
As Buchsbaum* simplicial complexes (introduced in \cite{Athanasiadis-Volkmar-12}) of dimension $d \geq 3$ have generically $d$-rigid vertex links (see \cite[Theorem 4.1]{Athanasiadis-Volkmar-12} for an even stronger result), we conclude that such complexes satisfy the inequality of Theorem \ref{LBT:mu} and hence also of Theorem \ref{main1}:

\begin{theorem}
Let $\Delta$ be a connected simplicial complex of dimension $d \geq 3$. Assume further that $\Delta$ is Buchsbaum*. 
Then
$g_2(\Delta) \geq {d+2 \choose 2} m(\Delta).$
\end{theorem}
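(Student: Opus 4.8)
The plan is to obtain this statement by chaining a rigidity-theoretic strengthening of Theorem~\ref{LBT:mu} with Inequality~\eqref{mu-vs-m}, so that essentially no new computation is required. First I would record the observation that the proof of the inequality part of Theorem~\ref{LBT:mu} given in \cite{Murai-15} nowhere uses the full force of the normal-pseudomanifold hypothesis: the only structural input is that each vertex link of $\Delta$ is generically $d$-rigid, which is precisely what supplies the lower bound on the $g_2$-contribution of the links compared, through Lemma~\ref{2.2}, against $\mu_1(\Delta)-\mu_0(\Delta)$. Consequently the inequality $g_2(\Delta)\geq {d+2 \choose 2}\left(\mu_1(\Delta)-\mu_0(\Delta)+1\right)$ holds for \emph{every} pure $d$-dimensional simplicial complex all of whose vertex links are generically $d$-rigid.

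Next I would feed in the hypothesis that $\Delta$ is Buchsbaum* of dimension $d\geq 3$. By \cite[Theorem~4.1]{Athanasiadis-Volkmar-12}, the vertex links of such a complex are generically $d$-rigid, so the extended form of Theorem~\ref{LBT:mu} from the previous paragraph applies. Since $\Delta$ is moreover connected, the ``in particular'' part of Theorem~\ref{3.3}, that is, Inequality~\eqref{mu-vs-m}, gives $\mu_1(\Delta)-\mu_0(\Delta)+1\geq m(\Delta)$. Combining the two bounds yields
$$g_2(\Delta)\geq {d+2 \choose 2}\left(\mu_1(\Delta)-\mu_0(\Delta)+1\right)\geq {d+2 \choose 2}m(\Delta),$$
which is the assertion.

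The main obstacle is concentrated in the first step: one must confirm that the argument of \cite{Murai-15} for Theorem~\ref{LBT:mu} factors cleanly through generic $d$-rigidity of the vertex links alone, with no covert appeal to conditions such as each ridge lying in exactly two facets or to the connectivity of higher-codimension links. Once that is verified, both the rigidity input for Buchsbaum* complexes and the chaining of inequalities are immediate; and since only the lower bound is claimed, no equality analysis is needed.
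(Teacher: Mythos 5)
Your proposal matches the paper's argument exactly: the paper likewise observes that the proof of Theorem~\ref{LBT:mu} in \cite{Murai-15} only uses generic $d$-rigidity of the vertex links, invokes \cite[Theorem~4.1]{Athanasiadis-Volkmar-12} to supply that rigidity for Buchsbaum* complexes of dimension $d\geq 3$, and then chains the resulting bound with Inequality~\eqref{mu-vs-m}. The one caveat you flag (checking that the argument of \cite{Murai-15} factors through link rigidity alone) is precisely the point the paper itself asserts without further elaboration, so there is nothing missing relative to the paper's own treatment.
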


\section{Lower bounds on the $h$-numbers}

In this section we consider pure simplicial complexes all of whose vertex links satisfy Serre's condition $(S_r)$; we establish lower bounds on the $h$-numbers of such complexes in terms of their $\mu$-numbers.
A relative simplicial complex $(\Delta,\Gamma)$ satisfies \textbf{Serre's condition} ($S_r$) (over $\KK$) if for every face $F \in \Delta$ (including the empty face),
$$\tilde H_i\big(\lk(F,\Delta),\lk(F,\Gamma);\KK\big)=0
\quad \mbox{ for all } i< \min\left\{ r-1,\dim\!\big(\lk(F,\Delta), \lk(F,\Gamma)\big)\right\}.$$

We recall some basic facts on complexes satisfying Serre's conditions. These conditions arise from Serre's $(S_r)$ condition in commutative algebra (see \cite[Section 2.1]{BH} for the definition). It is known that if $r \geq 2$, then a finitely generated graded module $M$ over a polynomial ring satisfies Serre's condition $(S_r)$ if and only if its deficiency module $K_M^j$ (that is, the Matlis dual of the $j$th local cohomology module of $M$) has Krull dimension $\leq j -r$  for all $0 \leq j <d$ (see \cite[Lemma 3.2.1]{Sc82}).
Consequently, for $r \geq 2$,
it follows from Hochster's formula for local cohomology modules \cite[Theorem 1.8]{Adiprasito-Sanyal} that a $(d-1)$-dimensional relative simplicial complex
$(\Delta,\Gamma)$ satisfies $(S_r)$ if and only if
$\KK[\Delta,\Gamma]$ satisfies Serre's condition ($S_r$). 
Thus, a $(d-1)$-dimensional relative complex $(\Delta,\Gamma)$ satisfies $(S_d)$ if and only if $(\Delta,\Gamma)$ is Cohen--Macaulay;
furthermore, a $(d-1)$-dimensional simplicial complex $\Delta$ satisfies $(S_2)$ if and only if it is pure and for every face $F \in \Delta$ of dimension $\leq d-3$, the link of $F$, $\lk(F,\Delta)$, is connected.

The following result was proved in  \cite{Murai-Terai}.

\begin{theorem}
\label{4.1}
Let $(\Delta,\Gamma)$ be a $(d-1)$-dimensional relative simplicial complex.
If $(\Delta,\Gamma)$ satisfies $(S_r)$ over $\KK$ then there is a sequence of linear forms $\theta_1,\dots,\theta_d$ such that
\begin{itemize}
\item[(i)] the multiplication map
$$\times \theta_i : \big(\KK[\Delta,\Gamma]/(\theta_1,\dots,\theta_{i-1} )\KK[\Delta,\Gamma]\big)_{k-1} 
\to \big(\KK[\Delta,\Gamma]/(\theta_1,\dots,\theta_{i-1} )\KK[\Delta,\Gamma]\big)_k$$
is injective for all $i=1,2,\dots,d$ and all $k \leq r$, and
\item[(ii)]
$\dim_\KK\big(\KK[\Delta,\Gamma]/(\theta_1,\dots,\theta_{d} )\KK[\Delta,\Gamma]\big)_i=h_i(\Delta,\Gamma)$ for all $i \leq r$.
\end{itemize}
\end{theorem}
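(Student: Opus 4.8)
The plan is to pass to commutative algebra and work with a generic linear system of parameters. Write $M=\KK[\Delta,\Gamma]$. As recalled in the paragraph preceding the theorem, the relative Hochster formula shows that $(\Delta,\Gamma)$ satisfies $(S_r)$ exactly when $M$ satisfies Serre's condition $(S_r)$ as an $S$-module, and by the deficiency-module criterion cited there this means $\dim H^j_{\mideal}(M)\le j-r$ for all $j<d$; in particular $H^j_{\mideal}(M)=0$ for $j<r$, so $\mathrm{depth}\,M\ge r$. I may assume $\KK$ infinite (otherwise extend scalars, which changes neither the $h$-numbers nor the validity of $(S_r)$). Fix a generic l.s.o.p. $\theta_1,\dots,\theta_d$, set $N_i=M/(\theta_1,\dots,\theta_i)M$ and $C_i=(0:_{N_{i-1}}\theta_i)$. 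Condition (i) is precisely that $(C_i)_k=0$ for all $k<r$ and all $i$, and once (i) is known, (ii) follows formally: the four-term sequence $0\to C_i(-1)\to N_{i-1}(-1)\xrightarrow{\theta_i}N_{i-1}\to N_i\to 0$ gives $\mathrm{Hilb}(N_i;t)=(1-t)\,\mathrm{Hilb}(N_{i-1};t)+t\,\mathrm{Hilb}(C_i;t)$, and since $\mathrm{Hilb}(C_i;t)\in t^{r}\ZZ[[t]]$ the correction lies in $t^{r+1}\ZZ[[t]]$; iterating and using $(1-t)^d\mathrm{Hilb}(M;t)=\sum_j h_j(\Delta,\Gamma)t^j$ yields $\mathrm{Hilb}(N_d;t)\equiv\sum_j h_j(\Delta,\Gamma)t^j\pmod{t^{r+1}}$, which is (ii). So the whole content is (i).

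For (i) I would first dispose of the range $i\le r$: since $\mathrm{depth}\,M\ge r$, a generic $\theta_1,\dots,\theta_r$ is an honest regular sequence, so each such $\theta_i$ is injective in all degrees and $C_i=0$. The substance is $i>r$, where $N_{i-1}$ may have depth $0$. Here I use that a generic l.s.o.p. is filter-regular, so each $C_i$ has finite length; as $N_{i-1}/H^0_{\mideal}(N_{i-1})$ has positive depth, a generic $\theta_i$ is a nonzerodivisor on it, whence $C_i=(0:_{H^0_{\mideal}(N_{i-1})}\theta_i)\subseteq H^0_{\mideal}(N_{i-1})$. Thus it suffices to prove the degree bound $H^0_{\mideal}(N_{i-1})_k=0$ for all $k<r$ and all $i\le d$, i.e.\ that the finite-length part of each successive quotient avoids the bottom $r$ degrees.

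The key input is that $(S_r)$ not only bounds the dimensions of the $H^q_{\mideal}(M)$ but pins their internal degrees from below. By the relative Hochster formula (the Adiprasito--Sanyal result cited above), $H^q_{\mideal}(M)_{-k}$ is a sum over $W\subseteq V$ with $|W|=k$ of relative cohomologies $\tilde H^{\,q-k-1}(\lk(W,\Delta),\lk(W,\Gamma);\KK)$; for $q<d$ the defining vanishing of $(S_r)$ on links forces these to vanish unless $q-k-1\ge r-1$, i.e.\ unless $k\le q-r$. Hence, for every $q<d$, $H^q_{\mideal}(M)$ is concentrated in internal degrees $[\,r-q,\,0\,]$, so its initial degree is at least $r-q$ (with $H^r_{\mideal}(M)$ sitting in degree $0$). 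I would then propagate this through the successive generic sections. Filter-regularity gives, for $q\ge 1$, short exact sequences $0\to\operatorname{coker}(\theta_i\mid H^q_{\mideal}(N_{i-1}))\to H^q_{\mideal}(N_i)\to\ker(\theta_i\mid H^{q+1}_{\mideal}(N_{i-1})(-1))\to 0$, so the connecting map raises the internal degree by one each time it lowers the cohomological degree. Consequently $H^0_{\mideal}(N_{i-1})$ is filtered by subquotients of $H^q_{\mideal}(M)(-q)$ for $0\le q\le i-1$, each of initial degree at least $(r-q)+q=r$; and since reaching cohomological degree $0$ from the (unbounded-below) top module $H^d_{\mideal}(M)$ would require $d$ sections, it never contributes for $i-1\le d-1$. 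Therefore $H^0_{\mideal}(N_{i-1})_k=0$ for $k<r$, which is exactly (i).

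The main obstacle is this propagation step: one must check carefully that the estimate $\mathrm{indeg}\,H^q_{\mideal}(M)\ge r-q$ survives the iterated filter-regular long exact sequences with only the expected $+1$ shift, and that the generic choice of $\theta_i$ genuinely realizes the finite-length behavior (so that the four-term and the above short exact sequences hold, and the kernels and cokernels of multiplication on the Artinian modules $H^q_{\mideal}(N_{i-1})$ are as small as genericity permits). The delicate part is purely the homological bookkeeping in the two indices, cohomological degree $q$ and internal degree $k$; once the invariant ``$\mathrm{indeg}\,H^q_{\mideal}(N_i)\ge r-q$ for $q<\dim N_i$'' is formulated and shown to be preserved under a generic linear section, each step reduces to a standard dimension count for a generic linear form acting on graded modules with finitely supported local cohomology.
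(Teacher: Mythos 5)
Your argument is correct, but be aware that the paper does not actually prove Theorem~\ref{4.1}: it imports the statement from \cite{Murai-Terai} and only explains why the proof given there for Stanley--Reisner \emph{rings} extends to Stanley--Reisner \emph{modules} (those proofs work for arbitrary squarefree modules, and $\KK[\Delta,\Gamma]$ is squarefree by Yanagawa's lemma). What you have written is in effect a self-contained reconstruction of that cited proof, carried out directly for the relative module: translate $(S_r)$ into the degree bound $H^q_{\mathfrak{m}}(\KK[\Delta,\Gamma])_{-k}=0$ for $k>q-r$ and $q<d$ (equivalently $\dim K^q\le q-r$, the criterion from \cite{Sc82} that the paper itself records), propagate the resulting bound ``initial degree of $H^q_{\mathfrak{m}}\ge r-q$'' through a generic filter-regular sequence, conclude that $(0:_{N_{i-1}}\theta_i)\subseteq H^0_{\mathfrak{m}}(N_{i-1})$ starts in degree at least $r$, and deduce (ii) from the Hilbert-series identity. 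The bookkeeping you flag does close up: in the inductive step for $H^q_{\mathfrak{m}}(N_i)$ one only needs the bounds for $H^q$ and $H^{q+1}$ of $N_{i-1}$, and since $q<\dim N_i$ both of these indices stay strictly below $\dim N_{i-1}$, so the unbounded-below top module $H^{d}_{\mathfrak{m}}(M)$ never reaches cohomological degree $0$ as long as $i\le d$. The one place to write out carefully is the $q=0$ term of the long exact sequence, where the left-hand piece is the cokernel of $\theta_i$ acting from $H^0_{\mathfrak{m}}\bigl(N_{i-1}/(0:\theta_i)\bigr)$ rather than from $H^0_{\mathfrak{m}}(N_{i-1})$; since it is still a quotient of $H^0_{\mathfrak{m}}(N_{i-1})$, the degree estimate only improves. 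Compared with the paper's citation-based treatment, your route is longer but self-contained, works verbatim in the relative setting via the Adiprasito--Sanyal formula, and does not require invoking the squarefree-module machinery.
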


While Theorem \ref{4.1} was verified in \cite{Murai-Terai} only for the case of Stanley--Reisner rings, it also holds in the generality of Stanley--Reisner modules. Indeed,  the proofs given in \cite{Murai-Terai} apply to an arbitrary squarefree module (a notion introduced and studied by Yanagawa in \cite{Ya}), while by \cite[Lemma 2.3]{Ya} all Stanley--Reisner modules are squarefree. (See also the proof of \cite[Theorem 5.6]{Yanagawa-11}.)

Since the proof of the main result of this section follows the same outline as the proof  of \cite[Theorem 6.5]{Murai-Novik-16}, instead of providing complete details we only sketch the main ideas. The following result is \cite[Lemma 5.5]{Murai-Novik-16}.

\begin{lemma}
\label{4.2}
Let $S=\KK[x_1,\dots,x_n]$, let $M$ be a finitely generated graded $S$-module, and let $\theta_1,\dots,\theta_d$ be linear forms.
If
$$\times \theta_i : \big(M/(\theta_1,\dots,\theta_{i-1})M\big)_{k-1} \to \big(M/(\theta_1,\dots,\theta_{i-1})M\big)_{k} 
$$
is injective for all $i=1,2,\dots,d$ and all $k \leq r$, then
$$\sum_{k\geq 0} (-1)^k \beta_{i+k,i+\ell}(M) \leq \sum_{k \geq 0} (-1)^k {n-d \choose i+k}\dim_\KK (M/(\theta_1,\dots,\theta_d)M)_{\ell-k} $$
for all $i \geq 0$ and $\ell \leq r-1$.
\end{lemma}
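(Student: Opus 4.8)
The plan is to track, for a fixed ``linear strand'' $\ell$, the alternating sum of graded Betti numbers
$$E^{R}_{i,\ell}(P):=\sum_{k\ge 0}(-1)^k\beta^{R}_{i+k,\,i+\ell}(P),$$
where $P$ is a finitely generated graded module over a polynomial ring $R$ and $\beta^R$ denotes Betti numbers computed over $R$. In this notation the asserted inequality reads $E^{S}_{i,\ell}(M)\le \sum_{k\ge 0}(-1)^k\binom{n-d}{i+k}\dim_\KK\big(M/(\theta_1,\dots,\theta_d)M\big)_{\ell-k}$ for $\ell\le r-1$. I would split the proof into two independent pieces: a purely homological ``trivial-structure'' bound over the smaller ring $\overline S:=S/(\theta_1,\dots,\theta_d)$, and a regularity-based identity that transports $E_{i,\ell}$ from $(S,M)$ down to $(\overline S,\overline M)$, where $\overline M:=M/(\theta_1,\dots,\theta_d)M$. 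Throughout I may assume $\theta_1,\dots,\theta_d$ are linearly independent (this follows from the injectivity hypothesis once $M$ is nonzero in the relevant low degrees, and otherwise may be arranged), so that each $S/(\theta_1,\dots,\theta_i)$ is a polynomial ring in $n-i$ variables.

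For the first piece, let $R$ be a polynomial ring in $m$ variables and $W$ any finitely generated graded $R$-module. Computing $\Tor^{R}(W,\KK)$ as the homology of the Koszul complex $W\otimes_R K_\bullet$ on the $m$ variables, I observe that in each fixed internal degree $q=i+\ell$ this is a bounded complex of finite-dimensional $\KK$-vector spaces whose term in homological degree $p$ has dimension $\binom{m}{p}\dim_\KK W_{q-p}$. For any such complex $C_\bullet$ the elementary identity $\dim C_p=\dim H_p+\dim B_p+\dim B_{p-1}$ (with $B_p$ the boundary space) telescopes to
$$\sum_{k\ge 0}(-1)^k\dim H_{i+k}=\sum_{k\ge 0}(-1)^k\dim C_{i+k}-\dim B_{i-1}\le \sum_{k\ge 0}(-1)^k\dim C_{i+k},$$
which is exactly the weak Morse inequality. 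Substituting the Koszul dimensions yields $E^{R}_{i,\ell}(W)\le \sum_{k\ge 0}(-1)^k\binom{m}{i+k}\dim_\KK W_{\ell-k}$ for all $i,\ell$; applied with $R=\overline S$ (so $m=n-d$) and $W=\overline M$, this is precisely the right-hand side of the lemma.

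The second, and harder, piece is the single-step comparison: writing $T:=S/(\theta)$ for a linear form $\theta$ with $\times\theta\colon N_{k-1}\to N_k$ injective for all $k\le r$, I claim $E^{S}_{i,\ell}(N)=E^{T}_{i,\ell}(N/\theta N)$ for every $\ell\le r-1$. The point is that modding out by $\theta$ fails to be regular only through $W_\theta:=(0:_N\theta)(-1)=\Tor_1^{S}(N,T)$, and the injectivity hypothesis forces $W_\theta$ to be concentrated in internal degrees $\ge r+1$. Tensoring a minimal graded free $S$-resolution of $N$ with $T$ produces a minimal complex of free $T$-modules whose only homology is $N/\theta N$ in homological degree $0$ and $W_\theta$ in degree $1$; comparing the two spectral sequences of the double complex obtained by further tensoring it with a Koszul complex resolving $\KK$ over $T$ (one collapses because the complex is minimal, the other because its homology sits in two adjacent homological degrees) gives the exact relation
$$\beta^{S}_{m,q}(N)=\beta^{T}_{m,q}(N/\theta N)+\beta^{T}_{m-1,q}(W_\theta).$$
Forming alternating sums then yields $E^{S}_{i,\ell}(N)=E^{T}_{i,\ell}(N/\theta N)+E^{T}_{i-1,\ell+1}(W_\theta)$, and the error term vanishes for $\ell\le r-1$: since $W_\theta$ is generated in degrees $\ge r+1$ we have $\beta^{T}_{p,q}(W_\theta)=0$ unless $q\ge p+r+1$, which is incompatible with the internal degree $i+\ell$ occurring in $E^{T}_{i-1,\ell+1}(W_\theta)$ once $\ell\le r-1$. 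Iterating this identity through $\theta_1,\dots,\theta_d$ (each $\theta_i$ acting injectively on $M/(\theta_1,\dots,\theta_{i-1})M$ in the required degrees, by hypothesis) transports $E^{S}_{i,\ell}(M)$ unchanged to $E^{\overline S}_{i,\ell}(\overline M)$, and combining with the first piece finishes the proof.

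I expect the main obstacle to be exactly this error control in the single step: one must both pin down the precise homological effect of replacing a regular element by an \emph{almost regular} one (the clean Betti identity above, which is where the change-of-rings bookkeeping lives) and verify that the deviation module $W_\theta$ is pushed into the linear strands $\ell\ge r$, so that it is invisible to the bottom $r$ strands that the lemma controls. Once this step is in place, the Morse inequality of the first piece and the iteration are formal.
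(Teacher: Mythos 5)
The paper does not actually prove Lemma~\ref{4.2}: it is quoted from \cite{Murai-Novik-16} (Lemma 5.5 there), so there is no in-paper argument to compare against, and your proof must stand on its own. It does. Both halves check out: the weak Morse inequality $\sum_{k\ge 0}(-1)^k\dim H_{i+k}\le\sum_{k\ge 0}(-1)^k\dim C_{i+k}$ applied to the internal-degree-$(i+\ell)$ strand of the Koszul complex over $\overline S$ gives exactly the right-hand side; and the change-of-rings spectral sequence $\Tor_q^T(\Tor_p^S(N,T),\KK)\Rightarrow\Tor_{p+q}^S(N,\KK)$ is concentrated in the two columns $p\in\{0,1\}$ (since $T=S/(\theta)$ has projective dimension $1$ over $S$), hence degenerates at $E^2$ and yields the exact identity $\beta^S_{m,q}(N)=\beta^T_{m,q}(N/\theta N)+\beta^T_{m-1,q}(W_\theta)$; your degree bookkeeping showing that $E^T_{i-1,\ell+1}(W_\theta)$ vanishes for $\ell\le r-1$ is correct because $W_\theta=(0:_N\theta)(-1)$ is concentrated in degrees $\ge r+1$. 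Two small points to tighten. First, the spectral sequence that collapses onto $\Tor^S(N,\KK)$ does so because each $F_p\otimes_S T$ is a \emph{free} $T$-module (so its $G_\bullet$-direction homology sits in one row), not because $F_\bullet$ is minimal; minimality is not actually needed in this step. Second, rather than asserting that linear independence of $\theta_1,\dots,\theta_d$ ``may be arranged,'' observe that if some $\theta_j$ lies in the span of $\theta_1,\dots,\theta_{j-1}$, then $\times\theta_j=0$ on $M/(\theta_1,\dots,\theta_{j-1})M$, so the injectivity hypothesis forces that quotient --- and hence $M$ itself, by looking at the minimal degree in which $M$ is nonzero --- to vanish in all degrees $\le r-1$; in that case every Betti number $\beta_{i+k,i+\ell}(M)$ with $\ell\le r-1$ and every $\dim_\KK(M/(\theta_1,\dots,\theta_d)M)_{\ell-k}$ is zero, and the inequality holds trivially. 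With those two repairs your argument is a complete and correct proof, organized one linear form at a time, which is in the same spirit as (and no less elementary than) the cited source.
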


Theorem \ref{4.1} asserts that if $(\Delta,\Gamma)$ satisfies $(S_r)$ then $\KK[\Delta,\Gamma]$ satisfies the assumptions of Lemma \ref{4.2}.
Substituting the upper bounds of Lemma \ref{4.2} into the definition of $\tilde \sigma$-numbers 
leads to the following upper bounds on their alternating sums.
(We omit the computation as it is exactly the same as in the proof of \cite[Proposition 6.2]{Murai-Novik-16} but with $d+1$ replaced by $d$.)

\begin{lemma}
\label{4.3}
If a $(d-1)$-dimensional relative simplicial complex $(\Delta, \Gamma)$ satisfies Serre's condition $(S_r)$ over $\KK$ then
$$\sum_{j=0}^i (-1)^{i-j} \tilde \sigma_{j-1}(\Delta,\Gamma;\KK) \leq \frac 1 {d+1} \sum_{j=0}^i (-1)^{i-j} \frac {h_j(\Delta,\Gamma)} {{d \choose j}} \mbox{ for } i \leq r-1.$$
\end{lemma}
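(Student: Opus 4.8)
The plan is to substitute the Betti-number estimates of Lemma~\ref{4.2} into the definition of the $\tilde\sigma$-numbers and then collapse the resulting double sum to the $h$-numbers by means of a single binomial coefficient identity; this follows the computation in the proof of \cite[Proposition 6.2]{Murai-Novik-16} essentially verbatim, except that the ambient dimension there is $d+1$ rather than $d$.

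First I would invoke Theorem~\ref{4.1}. Since $(\Delta,\Gamma)$ satisfies $(S_r)$, there are linear forms $\theta_1,\dots,\theta_d$ for which $M:=\KK[\Delta,\Gamma]$ satisfies the hypothesis of Lemma~\ref{4.2}, and moreover $\dim_\KK\big(M/(\theta_1,\dots,\theta_d)M\big)_i=h_i(\Delta,\Gamma)$ for all $i\leq r$. Writing $n=|V|$ and unravelling the definition of the $\tilde\sigma$-numbers, I would record the identity
\[
\sum_{j=0}^i(-1)^{i-j}\tilde\sigma_{j-1}(\Delta,\Gamma;\KK)
=\frac1{n+1}\sum_{k=0}^n\frac1{\binom nk}\,A_k,
\qquad
A_k:=\sum_{j=0}^i(-1)^{i-j}\beta_{k-j,k}(M).
\]
For each fixed internal degree $k$, the inner sum $A_k$ is, after the substitution $p=k-j$ of the homological index, precisely the left-hand side of Lemma~\ref{4.2} taken with internal degree $k$ and starting homological degree $k-i$ (the terms one must append to reach the full range $p\geq0$ vanish, since $\beta_{p,q}(M)=0$ whenever $p\notin[\,q-d,q\,]$ by Hochster's formula). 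For $k\geq i$ this is a legitimate instance of Lemma~\ref{4.2} with the parameter $\ell=i\leq r-1$ and yields that $A_k$ is at most its right-hand side. The boundary terms $k<i$, where the starting homological degree $k-i$ is negative, match the corresponding right-hand side \emph{exactly} rather than as an inequality, because the injectivity in Theorem~\ref{4.1}(i) forces $\sum_{p}(-1)^p\beta_{p,k}(M)=\sum_p(-1)^p\binom{n-d}p\dim_\KK\big(M/(\theta_1,\dots,\theta_d)M\big)_{k-p}$ in all internal degrees $k\leq r$. In either case $A_k$ is bounded above by the same expression.

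After replacing $\dim_\KK\big(M/(\theta_1,\dots,\theta_d)M\big)$ by the corresponding $h$-numbers via Theorem~\ref{4.1}(ii) and interchanging the summations over $k$ and over the homological index (substituting $q=i-p$), I would be left with an upper bound of the shape
\[
\frac1{n+1}\sum_{q=0}^i(-1)^{i-q}h_q(\Delta,\Gamma)\sum_{k}\frac{\binom{n-d}{k-q}}{\binom nk}.
\]
The final and essential step is the binomial coefficient identity
\[
\sum_{k}\frac{\binom{n-d}{k-q}}{\binom nk}=\frac{n+1}{(d+1)\binom dq},
\]
which replaces the $n$-dependent weight by the $d$-dependent weight $\tfrac1{(d+1)\binom dq}$ occurring on the right-hand side of the lemma; this is exactly where the ``$d$ in place of $d+1$'' relative to \cite[Proposition 6.2]{Murai-Novik-16} enters. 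I expect the main obstacle to be the bookkeeping: keeping the two index substitutions ($p=k-j$ and then $q=i-p$) and the two interchanges of summation consistent, and, above all, verifying that the low-degree boundary terms $k<i$ enter with the correct sign, so that the per-$k$ estimates may be summed without loss of the inequality. Granting the displayed binomial identity, the inequality of the lemma follows at once.
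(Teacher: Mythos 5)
Your argument is correct and is exactly the route the paper takes: the paper's (omitted) proof is precisely to feed Theorem~\ref{4.1} into Lemma~\ref{4.2}, substitute the resulting bounds into the definition of the $\tilde\sigma$-numbers, and run the computation of \cite[Proposition 6.2]{Murai-Novik-16} with $d+1$ replaced by $d$, which is what you reconstruct, including the correct treatment of the internal degrees $k<i$ (where injectivity in degrees $\leq r$ does force equality of the two Euler-characteristic expressions) and the correct Beta-integral binomial identity $\sum_k\binom{n-d}{k-q}/\binom{n}{k}=(n+1)/\bigl((d+1)\binom{d}{q}\bigr)$.
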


We are now in a position to prove Theorem \ref{main3} for the class of $d$-dimensional pure relative simplicial \textit{complexes} all of whose vertex links satisfy $(S_r)$. Recall that a pure relative simplicial complex $(\Delta,\Gamma)$ is called \textbf{Buchsbaum} (over $\KK$) if for every vertex $v$ of $\Delta$, $(\lk(v,\Delta),\lk(v,\Gamma))$ is Cohen--Macaulay (over $\KK$). In other words, a $d$-dimensional $(\Delta,\Gamma)$ is Buchsbaum if and only if it is pure and all vertex links of $(\Delta,\Gamma)$  satisfy $(S_d)$. 


\smallskip\noindent {\it Proof of Theorem \ref{main3}: \ }
Let $V$ be the vertex set of $\Delta$.
Since $(\Delta,\Gamma)$ is pure,
it follows from \cite[Proposition 2.3]{Swartz-04/05} that
$$i h_i(\Delta,\Gamma) +(d-i+2) h_{i-1}(\Delta,\Gamma)= \sum_{v \in V} h_{i-1}\big(\lk(v,\Delta),\lk(v,\Gamma)\big)$$
for $i=1,2,\dots,d+1$.
This result together with Lemmas \ref{2.2} and  \ref{4.3} yields that for all $i\leq r$,
\begin{align*}
\sum_{j=1}^i (-1)^{i-j} \mu_{j-1}(\Delta,\Gamma;\KK)
&= \sum_{j=1}^i (-1)^{i-j} \left( \sum_{v \in V} \tilde \sigma_{j-2} \big(\lk(v,\Delta),\lk(v,\Gamma);\KK\big) \right)\\
& \leq \sum_{j=1}^i (-1)^{i-j} \frac 1 {(d+1) {d \choose j-1}} \left( \sum_{v \in V} h_{j-1}\big(\lk(v,\Delta),\lk(v,\Gamma) \big) \right)\\
&= \sum_{j=1}^{i} (-1)^{i-j} \frac 1 {(d+1) {d \choose j-1}} \big(j h_j(\Delta,\Gamma)+(d-j+2) h_{j-1}(\Delta,\Gamma)\big)\\
&= \sum_{j=1}^{i} (-1)^{i-j}
\left( \frac {h_j(\Delta,\Gamma)} {{d+1 \choose j}} + \frac {h_{j-1}\big(\lk(v,\Delta),\lk(v,\Gamma)\big)} {{d+1 \choose j-1}} \right)\\
&=
\frac {h_i(\Delta,\Gamma)} {{d+1 \choose i}} - (-1)^i h_0(\Delta,\Gamma).
\end{align*}
Since, by definition, $h_0(\Delta,\Gamma)=f_{-1}(\Delta,\Gamma)$,
the above inequality completes the proof.
\hfill$\square$\medskip

As we discussed in the Introduction, Theorem \ref{main2} is an immediate consequence of Inequality \eqref{mu-vs-m} and the ($r=2$ \& $\Gamma= \emptyset$)-case of Theorem \ref{main3}. 



We close this section with one additional remark. According to Theorem \ref{main3}, if $(\Delta,\Gamma)$ is a $d$-dimensional Buchsbaum relative simplicial complex, then  $$h_i(\Delta,\Gamma) \geq {d+1 \choose i} \left( \sum_{j=1}^i (-1)^{i-j} \mu_{j-1}(\Delta,\Gamma;\KK)  + (-1)^i f_{-1}(\Delta,\Gamma)\right) \quad \mbox{for all $i \leq d$}.$$ These inequalities provide a strengthening of a previously known fact \cite[Theorem 3.4]{Novik-Swartz-09:Socles} that for a Buchsbaum   $(\Delta,\Gamma)$, $h_i(\Delta,\Gamma) \geq {d+1 \choose i}  \sum_{j=1}^i (-1)^{i-j} \tilde b_{j-1}(\Delta,\Gamma;\KK)$ for all $i \leq d$.

\section{Simplicial posets}

The goal of this section is to explain how the proofs of Theorems \ref{main2} and \ref{main3} can be extended to the generality of simplicial posets. This requires a quick review of  the definition of simplicial posets and related notions as well as of the corresponding algebraic background.

A \textbf{simplicial poset} $\Delta=(\Delta,\preceq)$ is a finite poset with a unique minimal element, $\emptyset$, and with the property that for every $F\in\Delta$, the interval $[\emptyset, F]$
is a Boolean lattice \cite{Stanley-91}. Consequently, $\Delta$ is graded and atomic.  Furthermore, it follows from the results of \cite{Bjorner-84} that any simplicial poset $\Delta$ is the face poset of a certain regular CW-complex, $|\Delta|$, all of whose closed cells are simplices. Thus, one can think of a simplicial poset as a collection of simplices glued together in a way that every two simplices intersect along an arbitrary (possibly empty) subcomplex of their boundaries. Another important consequence of \cite{Bjorner-84} is that $|\Delta|$ has a well-defined \textbf{barycentric subdivision}, denoted by $\sd(\Delta)$, which is the simplicial complex isomorphic to the order complex of $\Delta-\{\emptyset\}$ and homeomorphic to $|\Delta|$. In particular, the (singular) Betti numbers and the fundamental group of $|\Delta|$ coincide with the simplicial Betti numbers and the fundamental group of $\sd(\Delta)$, respectively. 

From now on we refer to a simplicial poset $\Delta$ and its realization $|\Delta|$ almost interchangeably. The elements of $\Delta$ are called faces and the elements of rank $1$ are called vertices. The dimension of a face $F$ is defined as the rank of the interval $[\emptyset, F]$ minus $1$, and the vertex set of $F$ is defined by $V(F)=\{v\in\Delta \ : \rk(v)=1, v\preceq F\}$.

Since all the faces of a simplicial poset are simplices, many definitions pertaining to simplicial complexes have natural extensions to simplicial posets. For instance, if $\Delta$ is a simplicial poset and $v$ is a vertex of $\Delta$, then the link of $v$ in $\Delta$ is defined by $\lk(v,\Delta)=\{F\in \Delta \ : \ v\preceq F\}$. It is easy to check that $\lk(v,\Delta)$ is  a simplicial poset with minimal element $v$, and that $\sd(\lk(v,\Delta))$ is isomorphic to $\lk(v,\sd(\Delta))$. (However, it is worth pointing out that, in contrast to the setting of simplicial complexes, a vertex link of a simplicial poset $\Delta$ is not naturally a subcomplex of $\Delta$: for instance, consider the link of a vertex in a simplicial poset consisting of two vertices and three edges joining them.) Similarly, for a subset $W$ of vertices of $\Delta$, the restriction of $\Delta$ to $W$ is defined by $\Delta_W=\{F\in\Delta \ : \ V(F)\subseteq W\}$.

In analogy with relative simplicial complexes, a \textbf{relative simplicial poset} is a pair of simplicial posets $(\Delta, \Gamma)$ such that $\Gamma\subseteq \Delta$ is a lower ideal of $\Delta$.
The dimension of a relative simplicial poset $(\Delta, \Gamma)$ is $\dim (\Delta, \Gamma)=\max\{\dim F \ : \ F\in \Delta \setminus \Gamma\}$. As in the case of relative simplicial complexes, we denote by $f_i=f_{i}(\Delta, \Gamma)$ the number of $i$-dimensional faces of $\Delta \setminus \Gamma$ and  by $(f_{-1},f_0\ldots, f_{\dim(\Delta, \Gamma)})$ the $f$-vector of $(\Delta,\Gamma)$. With the above notions in hand, the definitions from Section 2 of the $h$-vector, the $\mu^\varsigma$-numbers, and the $\mu$-numbers for a relative simplicial complex carry over verbatim to the definitions of the same objects for a relative simplicial poset. 

The following lemma provides a connection between the $\mu$-numbers of a relative simplicial poset $(\Delta,\Gamma)$ and the $\mu$-numbers of its barycentric subdivision --- the relative simplicial complex $(\sd(\Delta), \sd(\Gamma))$. If $\Delta$ is a simplicial poset with vertex set $V$, $|V|=n$, and $\varsigma=(v_1,\ldots,v_n)\in\Sm(V)$, we define an extension of $\varsigma$ to an ordering $\sd(\varsigma)$ of the vertices of $\sd(\Delta)$ by inserting between $v_{k-1}$ and $v_k$ the barycenters of the faces of $\Delta_{\{v_1,\ldots,v_k\}}$ that contain $v_k$, listing them in the increasing order of the dimension of their faces. (Note that the choice of $\sd(\varsigma)$ may not be unique.)

\begin{lemma} \label{mu-properties}
Let $(\Delta, \Gamma)$ be a relative simplicial poset, let $V$ be the vertex set of $\Delta$, and let $\varsigma\in\Sm(V)$. For an arbitrary field $\KK$ and for all $i\geq 0$,
\begin{equation} \label{mu=mu}
\mu_i^{\varsigma}(\Delta, \Gamma; \KK)=\mu_i^{\sd(\varsigma)}(\sd(\Delta), \sd(\Gamma); \KK).
\end{equation}
In particular, the $\mu^\varsigma$-numbers of $(\Delta, \Gamma)$, and hence also the $\mu$-numbers of $(\Delta, \Gamma)$, satisfy the Morse inequalities of Lemma \ref{2.1}; furthermore, if $\Delta$ is connected, then $\mu_1(\Delta)-\mu_0(\Delta)+1 \geq m(\Delta)$.
\end{lemma}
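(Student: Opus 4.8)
The plan is to deduce everything from the identity \eqref{mu=mu}, which I would prove by a vertex-by-vertex analysis of its right-hand side. Once \eqref{mu=mu} is in hand, the ``in particular'' part is essentially free: applying the Morse inequalities of Lemma \ref{2.1} to the relative simplicial \emph{complex} $(\sd(\Delta),\sd(\Gamma))$ with the ordering $\sd(\varsigma)$, rewriting the right-hand side by \eqref{mu=mu}, and using that a barycentric subdivision preserves the homeomorphism type of $|\Delta|$ and of the pair — hence the Betti numbers $b_j(\sd(\Delta),\sd(\Gamma))=b_j(\Delta,\Gamma)$ — transfers the Morse inequalities to $(\Delta,\Gamma)$; averaging over $\varsigma\in\Sm(V)$ passes them from the $\mu^\varsigma$- to the $\mu$-numbers. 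For the final inequality I would apply Theorem \ref{3.3} to $\sd(\Delta)$ and $\sd(\varsigma)$ to get $\mu_1^{\sd(\varsigma)}(\sd(\Delta))-\mu_0^{\sd(\varsigma)}(\sd(\Delta))\ge m(\sd(\Delta))-b_0(\sd(\Delta))=m(\Delta)-1$ (using $|\sd(\Delta)|\cong|\Delta|$), then rewrite the left side via \eqref{mu=mu} and average over $\varsigma$.

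To prove \eqref{mu=mu}, write $\Delta_k=\Delta_{\{v_1,\dots,v_k\}}$ and $\Gamma_k=\Gamma_{\{v_1,\dots,v_k\}}$, and partition the vertices of $\sd(\Delta)$ — the barycenters $\hat F$ of the nonempty faces $F$ of $\Delta$ — into blocks $B_1,\dots,B_n$, where $B_k$ collects those $\hat F$ with $v_k=\max_\varsigma V(F)$. By the construction of $\sd(\varsigma)$ each $B_k$ is a consecutive interval whose \emph{last} element is $\widehat{\{v_k\}}=v_k$, all earlier elements having positive dimension. Writing $W=W_{\hat F}$ for the set of vertices of $\sd(\Delta)$ occurring up to and including $\hat F$, I would establish two facts. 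First, for $\hat F=\widehat{\{v_k\}}$ one has $\sd(\Delta)_W=\sd(\Delta_k)$ and $\sd(\Gamma)_W=\sd(\Gamma_k)$, so using $\lk(v_k,\sd(\Delta_k))\cong\sd(\lk(v_k,\Delta_k))$ together with its $\Gamma$-analogue and the subdivision-invariance of relative homology, the corresponding summand equals $\tilde b_{i-1}(\lk(v_k,\Delta_k),\lk(v_k,\Gamma_k))$ — exactly the $k$-th term of $\mu_i^\varsigma(\Delta,\Gamma)$. Second, every summand coming from an $\hat F\in B_k$ with $\dim F\ge 1$ vanishes. Granting these, summing over all barycenters yields $\mu_i^{\sd(\varsigma)}(\sd(\Delta),\sd(\Gamma))=\sum_k\tilde b_{i-1}(\lk(v_k,\Delta_k),\lk(v_k,\Gamma_k))=\mu_i^\varsigma(\Delta,\Gamma)$.

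The heart of the argument is the vanishing. Fix $\hat F\in B_k$ with $\dim F\ge 1$. Tracking which barycenters lie in $W$ shows that $\lk(\hat F,\sd(\Delta)_W)$ is the order complex of the proper nonempty subfaces of $F$ with the single vertex $\widehat{\{v_k\}}$ deleted: every proper subface of $F$ has its barycenter in $W$ except $\widehat{\{v_k\}}$ (which comes last in $B_k$), while no face properly containing $F$ qualifies. Thus this link is the deletion of one vertex from $\sd(\partial F)$, a triangulation of $S^{\dim F-1}$, hence a PL ball and in particular contractible. For the relative term: if $F\in\Gamma$, then $\Gamma$ being a lower ideal forces $\lk(\hat F,\sd(\Gamma)_W)=\lk(\hat F,\sd(\Delta)_W)$, so the pair has zero relative homology; if $F\notin\Gamma$, then $\hat F$ is not a vertex of $\sd(\Gamma)$, so $\lk(\hat F,\sd(\Gamma)_W)$ is void and the relative homology collapses to the reduced homology of the contractible link, which again vanishes. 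The step most likely to cause trouble is precisely this bookkeeping: one must fix the order within each block so that \emph{exactly} $\widehat{\{v_k\}}$ (and nothing else) is the deleted vertex, and one must use that the $\Gamma$-link is genuinely \emph{void} — not the irrelevant complex $\{\emptyset\}$ — when $F\notin\Gamma$, since otherwise a spurious $0$-dimensional relative class would survive and break the identity.
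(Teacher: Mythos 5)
Your proposal is correct and follows essentially the same route as the paper's proof: block the vertices of $\sd(\Delta)$ by the $\varsigma$-maximal vertex of the corresponding face, identify the contribution of each original vertex $v_k$ via $\bigl(\lk(v_k,\sd(\Delta)_{\le v_k}),\lk(v_k,\sd(\Gamma)_{\le v_k})\bigr)=\sd\bigl(\lk(v_k,\Delta_{\{v_1,\dots,v_k\}}),\lk(v_k,\Gamma_{\{v_1,\dots,v_k\}})\bigr)$, and kill the contribution of each higher-dimensional barycenter $\hat F$ by observing that its link in the initial segment is $\sd(\partial F)$ with the vertex $v_k$ deleted (hence contractible), splitting into the cases $F\notin\Gamma$ (void relative part) and $F\in\Gamma$ (links coincide since $\Gamma$ is a lower ideal). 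The bookkeeping points you flag—$v_k$ coming last in its block and the $\Gamma$-link being genuinely void—are exactly the ones the paper's argument relies on, and your derivation of the ``in particular'' statements from \eqref{mu=mu} matches the intended deduction.
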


\begin{proof} Let $U$ be the vertex set of $\sd(\Delta)$. When $w\in U$, we denote by $\sd(\Delta)_{\leq w}$ and $\sd(\Gamma)_{\leq w}$ the subcomplexes of $\sd(\Delta)$ and $\sd(\Gamma)$, respectively, induced by $\{u\in U \ : \ u\leq w\}$ --- the initial segment of $U$  in the ordering $\sd(\varsigma)$. As follows from \eqref{mu-def}, to prove \eqref{mu=mu}, it suffices to show that for each vertex $v_k\in V$, the contribution of $v_k$ to both sides of \eqref{mu=mu} is the same, that is,
\[
\tilde b_{i-1}\big(\lk(v_k,\Delta_{\{v_1,\ldots,v_{k}\}}), \lk(v_k,\Gamma_{\{v_1,\ldots,v_{k}\}});\KK\big)= \tilde b_{i-1}\big(\lk(v_k, \sd(\Delta)_{\leq v_k}), \lk(v_k,\sd(\Gamma)_{\leq v_k});\KK\big),
\]
while for each $w\in U\setminus V$, the contribution of $w$ to the right-hand side of \eqref{mu=mu} is zero, that is,
\begin{equation}\label{zero}
\tilde b_{i-1}\big(\lk(w,\sd(\Delta)_{\leq w}), \lk(w,\sd(\Gamma)_{\leq w});\KK\big)=0.\end{equation}
Indeed, the former equation holds since the relative complex $\big(\lk(v_k, \sd(\Delta)_{\leq v_k}), \lk(v_k,\sd(\Gamma)_{\leq v_k})\big)$ is the barycentric subdivision of $\big(\lk(v_k,\Delta_{\{v_1,\ldots,v_{k}\}}), \lk(v_k,\Gamma_{\{v_1,\ldots,v_{k}\}})\big)$. As for the latter equation, suppose that $w$ is the barycenter of a face $F$ of $\Delta$, and that $v_k$ is the maximal vertex of $F$ in the ordering $\varsigma$. If $F$ is not a face of $\Gamma$, then $\lk(w,\sd(\Gamma)_{\leq w})=\emptyset$ while $\lk(w, \sd(\Delta)_{\leq w})$ is the complex obtained by barycentrically subdividing the boundary of the simplex $F$ and then deleting the vertex $v_k$. Thus, in this case $\big(\lk(w,\sd(\Delta)_{\leq w}), \lk(w,\sd(\Gamma)_{\leq w})\big)=\lk(w,\sd(\Delta)_{\leq w})$ is contractible. On the other hand, if $F$ is also a face of $\Gamma$, then so are all the faces of $F$. Hence $\lk(w,\sd(\Delta)_{\leq w})=\lk(w,\sd(\Gamma)_{\leq w})$, that is, $\big(\lk(w,\sd(\Delta)_{\leq w}), \lk(w,\sd(\Gamma)_{\leq w})\big)=\emptyset$. In either case, \eqref{zero} follows.
\end{proof}

We now turn our discussion to analogs of Stanley--Reisner rings and modules (introduced in \cite{Stanley-91}) for simplicial posets. Let $\Delta$ be a simplicial poset. Consider  the polynomial ring $\tilde{S}=\KK[x_F \ : \ F\in\Delta]$ with one variable per each face of $\Delta$. The \textbf{face ideal} of $\Delta$, $J_\Delta$, is the ideal of $\tilde{S}$ generated by the elements of the following form:
\begin{itemize} 
\item $x_Fx_G$ for all pairs of elements $F,G\in\Delta$ that have no common upper bound in $\Delta$;
\item $x_Fx_G - x_{F\wedge G}\sum x_H$ for pairs of $F,G$ incomparable in $\Delta$, where the sum is over the set of all minimal upper bounds of $F$ and $G$. (If $F$ and $G$ have an upper bound $H$, then $F$ and $G$ are elements of $[\emptyset, H]$, which is a Boolean lattice, and so $F\wedge G$ is well defined.)
\end{itemize} 
The ring $\KK[\Delta]=\tilde{S}/J_\Delta$ is called the \textbf{face ring} of $\Delta$. If $(\Delta, \Gamma)$ a simplicial poset, then we define the \textbf{face module} of $(\Delta, \Gamma)$ as $\KK[\Delta,\Gamma]=J_\Gamma/J_\Delta$.

The key that allows us to extend Theorems \ref{main2} and \ref{main3} to the generality of simplicial posets comes from the body of work showing that the face rings and modules of simplicial posets enjoy many of the same properties that the Stanley--Reisner rings and modules of simplicial complexes do. For instance, if $\Delta$ is a simplicial poset of dimension $d-1$, then (1) the Krull dimension of $\KK[\Delta]$ is $d$ and the $\ZZ$-graded Hilbert series of $\KK[\Delta]$ is given by $(1-x)^{-d}\sum_{i=0}^d h_i(\Delta)x^i$ (see \cite[Proposition 3.8]{Stanley-91}); (2) $\KK[\Delta]$ is a finitely generated graded module over $S=\KK[x_v:v \in \Delta, \rk(v)=1]$ and the Betti numbers of $\KK[\Delta]$, $\beta_{i,j}(\KK[\Delta])=\dim_\KK \Tor_i^{{S}}(\KK[\Delta],\KK)_j$, satisfy Hochster's formula that expresses these numbers in terms of simplicial Betti numbers of induced subposets of $\Delta$ (see \cite[Corollary 4.6]{Duval-97}). In fact, it is not hard to see that Duval's proof of this latter fact can be generalized to the case of $\KK[\Delta,\Gamma]$, where $(\Delta,\Gamma)$ is a relative simplicial poset. Consequently, Lemma \ref{2.2} continues to hold for relative simplicial posets. Finally, since face modules of simplicial posets are squarefree modules (this follows from \cite[Lemma 2.3]{Ya} and \cite[Lemma 2.5]{Yanagawa-11}) and since the proof of Theorem \ref{4.1} applies to all squarefree modules, we conclude that Theorem \ref{4.1} holds in the generality of face modules of simplicial posets. 

Therefore, \emph{all} results of Section 4, including Theorem \ref{main3}, continue to hold for relative simplicial posets. Furthermore, the last part of Lemma \ref{mu-properties} combined with Theorem \ref{main3} implies that Theorem \ref{main2} also continues to hold for simplicial posets. In fact, the inequalities of Theorem \ref{main2} are sharp for simplicial posets; this follows from \cite[Theorem 2.5]{Browder-Klee-14} and the Morse inequalities.

{\small
\bibliography{fundamental-biblio}
\bibliographystyle{plain}
}
\end{document}